\documentclass[12pt]{article}
\usepackage{verbatim}
\usepackage{amssymb}
\usepackage{amsthm}
\usepackage{amsmath}
\usepackage{mathrsfs}
\usepackage[margin=1in]{geometry}

%\input ../def_latex_notheoremcommands
%\usepackage{latexsym}
%\usepackage{amssymb}
%\usepackage{euscript}
%\usepackage[dvips]{graphics}
%\usepackage{epsf}
%\NeedsTeXFormat{LaTeX2e}[1994/12/01]

\let\cal=\mathcal      

\def\mcc{M\raise.5ex\hbox{c}C}
\def\mccarthy{M\raise.5ex\hbox{c}Carthy}
\def\Hu{\H}
\def\sz{Szeg\Hu{o} }

%Hilbert spaces

\def\K{{\cal K}}

%Banach spaces

%modified Greek letters

\def\l{\lambda}

\def\vare{\varepsilon}

%abbreviations

\let\i=\infty

\def\la{\langle}
\def\ra{\rangle}
\def\={\ = \ }

%boldface

%other
\def\A{{\cal A}}
\def\BB{{\cal B}}

%Bbb
\def\C{\mathbb C}

\def\D{\mathbb D}

\def\NN{\mathbb N}

%Modified symbols

%Commands

%Set-up
\def\be{\setcounter{equation}{\value{theorem}} \begin{equation}}
\def\ee{\end{equation} \addtocounter{theorem}{1}}
\def\beq{\begin{eqnarray*}}
\def\eeq{\end{eqnarray*}}

\def\vs{\vskip 5pt}

\def\bp{{\sc Proof: }}
\def\ep{{}{\hfill $\Box$} \vskip 5pt \par}

\def\bl{\begin{lemma}}
\def\el{\end{lemma}}
\def\bt{\begin{theorem}}
\def\et{\end{theorem}}
\def\bprop{\begin{prop}}
\def\eprop{\end{prop}}
\def\bd{\begin{definition}}
\def\ed{\end{definition}}
\def\br{\begin{remark}}
\def\er{\end{remark}}
\def\bexer{\begin{exercise}}
\def\eexer{\end{exercise}}
\def\bfig{\begin{figure}}
\def\efig{\end{figure}}

\usepackage[usenames,dvipsnames] {color}

\definecolor{dark_purple}{rgb}{0.4, 0.0, 0.4}
\definecolor{dark_green}{rgb}{0.0, 0.7, 0.0}

\numberwithin{equation}{section}

\title{The Hardy-Weyl algebra}
\author{Jim Agler
and
John E. M\raise.5ex\hbox{c}Carthy
\thanks{Partially supported by National Science Foundation Grant  
DMS 2054199}
}
\date{\today}

\newcommand{\mc}{M\raise.45ex\hbox{c}Carthy}

\def\ip#1#2{\langle#1,#2\rangle}

\usepackage{mathtools}

\renewcommand\NN{{\mathbb N}}

\newcommand\wt{\widehat{T}}

\renewcommand\L{\Lambda}
\newcommand{\Kz}{\K_{\A}}
\newcommand{\Az}{\A_0}
\renewcommand\P{{\mathcal P}}
\newcommand\Q{{\mathcal Q}}
\newcommand\cpd{C(\partial \D(1,1))}
\newcommand\hfp{H_{f_+}}

\newcommand\fno{f_{n+1}}
\newcommand\Fno{F_{n+1}}
\newcommand\fz{f_0}
\newcommand\fo{f_1}
\newcommand\Fo{F_1}

\def\d{\mathbb{D}}
\def\t{\mathbb{T}}

\def\calc{\mathcal{C}}
\def\calk{\mathcal{K}}

\def\ltwo{L^2}
\def\set#1#2{\{ #1 \, | \, #2\}}
\def\norm#1{\| #1 \|}
\newcommand{\twopartdef}[4]
{
	\left\{
		\begin{array}{ll}
			#1 & \mbox{if } #2 \\
			#3 & \mbox{if } #4
		\end{array}
	\right.
}
\theoremstyle{definition}
\newtheorem{defin}[equation]{Definition}
\newtheorem{definition}[equation]{Definition}
\newtheorem{lem}[equation]{Lemma}
\newtheorem{lemma}[equation]{Lemma}
\newtheorem{prop}[equation]{Proposition}
\newtheorem{thm}[equation]{Theorem}
\newtheorem{claim}[equation]{Claim}

\newtheorem{question}[equation]{Question}

\newtheorem{cor}[equation]{Corollary}
\newtheorem{rem}[equation]{Remark}
\renewcommand\bt{\begin{thm}}
\renewcommand\et{\end{thm}}
\renewcommand\be{\begin{equation}}
\renewcommand\ee{\end{equation}}
\begin{document}

\bibliographystyle{plain}

\maketitle

\begin{abstract}
We study the algebra $\A$ generated by the Hardy operator $H$ and the operator $M_x$ of multiplication by $x$ on $L^2[0,1]$. We call $\A$ the Hardy-Weyl algebra.
We show that its quotient by the compact operators is isomorphic to the algebra of functions that are continuous on $\Lambda$ and analytic on the interior of $\Lambda$ for a planar set $\Lambda$ = $[-1,0] \cup \overline{ \D(1,1)}$, which we call the lollipop. We find a Toeplitz-like short exact sequence for the $C^*$-algebra generated by $\A$.

We study the operator $Z = H - M_x$, show that its point spectrum is $(-1,0] \cup \D(1,1)$,  and that the eigenvalues grow in multiplicity as the points move to $0$ from the left. 
\end{abstract}

\section{Introduction}

The classical Weyl algebra is generated by the operators of 
multiplication by $x$, denoted $M_x$, and differentiation, denoted by $D$. These operators satisfy the commutator relation
\be
\label{eqwc}
DM_x - M_x D \= 1 .
\ee
The algebra generated by these relations has been studied extensively in both algebra and operator theory---see e.g. 
the books \cite{os12, masch16, sch20,  gal15}.
In operator theory, the study is complicated by the fact that no bounded operators satisfy \eqref{eqwc}.
In this note, we shall study the associated algebras that arise when one replaces the differentiation operator  $D$ by
a bounded integration operator. 

 The  Hardy operator $H$ is defined on $L^2[0,1]$  by 
\[
H f (x) \=  \frac{1}{x} \int_0^x f(t) dt .
\]
Let $V = M_x H $ denote the Volterra operator
\[
V : f \mapsto  \int_0^x f(t) dt ,
\]
which is a right inverse of $D$.
These operators give rise to   a new set of relations instead of \eqref{eqwc}, namely
\beq
VM_x - M_x V &\=& - V^2 \\
HM_x - M_x H &=& - HM_x H  \= - HV.
\eeq

 We shall use $L^2$ to mean $L^2[0,1]$ throughout.
\begin{definition}
{\rm
%Let $\Af$ denote the unital algebra generated by $H$ and $M_x$, and let $\A$ denote its norm closure in $B(L^2)$.
Let $\A$ denote the closure of the  unital algebra generated by $H$ and $M_x$ in the norm topology of $B(L^2)$.
We call $\A$ the Hardy-Weyl algebra.
}\end{definition}

When is an operator in $\A$? Can all the elements be described?
Since the commutator of $H$ and $M_x$ is compact, we first describe
 the quotient of $\A$ by the compact operators. Define $\L$, a subset of the plane, by
\[
\L \= [-1,0] \cup \overline{ \D(1,1)} .
\]
We call $\L$ the {\em lollipop}. The {\em lollipop algebra} $A(\L)$ is the Banach algebra of functions that are continuous on $\L$ and analytic on ${\rm int}(\L)$,
equipped with the maximum modulus norm.
 Let $\K$ denote the compact operators on $L^2$,
and let $\Kz = \K \cap \A$,
\bt
\label{thma1}
There is a Banach algebra isomorphism $\gamma$ from $A(\L)$ onto $\A/\Kz$.
It is given by 
\[
\gamma(f) \= f|_{[-1,0]} (-M_x) + f|_{\overline{\D(1,1)}} (H) - f(0) .
\]
\et
Let $\theta : \A \to A(\L)$ be defined by $\theta (T) = \gamma^{-1}([T])$, where $[T]$ is the projection
of $T$ onto $\A/\Kz$.
As a corollary to Theorem \ref{thma1} we obtain that every element $T$ of $\A$ can be written uniquely as
\be
\label{eqa11}
T \= M_\phi + g(H) + K ,
\ee
where $\phi \in C([0,1])$, $g$ is in $A(\overline{\D(1,1)})$, $\phi(0) = g(0)$, and $K \in \Kz$.
%For $T$ as in \eqref{eqa11}, we have $\theta(T)$ is the function that equals $\phi(-x)$ on the stick  $[-1,0]$
%and
%$g$ on the bulb $\overline{\D(1,1)}$  of $\L$.
%We prove Theorem \ref{thma1} in Section \ref{secb}.

\vs
If we look at $C^*(\A)$, the $C^*$-algebra generated by $\A$, we get something similar to the Toeplitz algebra short exact sequence.
See \cite{brwi19, ea20, rrs19} for some recent results on the Toeplitz algebra, and  \cite{arme20, ha18, pa90, ps91}
for some applications.
 The lollipop algebra is replaced by the functions that are continuous on the boundary.
\bt
\label{thma2}
There is a short exact sequence of $C^*$-algebras
\[
0 \to\K    \to C^*(\A)  \to C(\partial \L)  \to 0 .
\]
\et
%We prove Theorem \ref{thma2} in Section \ref{secd}.

\vs
In addition to studying the algebra generated by $H$ and $M_x$, one may also study the smaller algebra generated
by $V$ and $M_x$. 
 \begin{definition}
 \label{defina3}
{\rm The algebra $\Az$ is the norm-closed unital algebra generated by $V$ and $M_x$.
}\end{definition}
One can see that $\Az$ is a proper subalgebra of $\A$ by noting that its quotient by the compact operators
is isomorphic to $C([0,1])$.

The lattice of closed invariant subspaces of  the Volterra operator $V$ was shown independently by Brodskii and Donoghue \cite{br57,don57}
to be
\[
{\rm Lat}(V) \= \Big\{ \{ f \in L^2 : f = 0 {\rm\ on\ } [0,s] \} : s \in [0,1] \Big\}.
\]
We let  ${\rm AlgLat}(V)$ denote the set of bounded operators on $L^2$ that leave invariant
every element of ${\rm Lat V}$. This is a large algebra---it includes all right translation operators for example.
It is described by the following theorem of Radjavi and Rosenthal  \cite[Example 9.26]{raro73}. 
\bt
\label{thmrr}
The algebra ${\rm AlgLat}(V)$ is the weak operator topology closure of the algebra generated
by $V$ and $M_x$.
\et
It follows from Theorem \ref{thmrr} that $H$ is in the WOT closure of $\A_0$, and hence $\A$ and $\A_0$
have the same WOT closure. 
However, no extra compact operators are added in the WOT closure.

%  In \cite{amHI} we showed that every flat monomial operator is in ${\rm AlgLat}(V)$.

 \bt
 \label{thma3}
 \[
 {\rm AlgLat}(V) \cap \K \ \subset \ \Az .
 \]
 \et
 %We prove Theorem \ref{thma3} in Section \ref{secl}.
 
 \vs
In Section \ref{secg} we consider the operator $Z = H - M_x$.  
It follows from Theorem \ref{thma1}  that $[Z]$ generates $\A/\Kz$.
We show $Z$ has a surprisingly rich collection of eigenvectors.
\bt
\label{thma4}
Let $Z = H - M_x$. Then 
\[
\sigma_p(Z) \= (-1,0] \cup \D(1,1) .
\]
\et
%Let $\theta : \A \to A(\L)$ be defined by $\theta (T) = \gamma^{-1}([T])$, where $[T]$ is the projection
%of $T$ onto $\A/\Kz$.
The algebraic multiplicity of the eigenvalues of $Z$ on the stick $(-1,0]$ increases
as $\lambda \to 0^-$, and hence  operators $X$  in the closed algebra generated by $Z$ have the property that $\theta(X)$ is not just in $A(\L)$, but is smoother.
We prove:

%\vs
%{\bf  Theorem \ref{thmg2}.}
\bt
\label{thma4}
Let $X$ be in the norm-closed algebra generated by $Z$. Then $\theta(X)$ is  $C^m$ 
  on $ (-\frac{2}{2m+1},0)$.
  \et

\section{Preliminaries}
\label{secpre}

\begin{definition}
A {\em monomial operator} is a bounded linear operator $T: L^2[0,1] \to L^2 [0,1]$ with the property  that there exist constants $c_n$ and $p_n$ so that
\be
\label{eqa1}
T : x^n \mapsto c_n \, x^{p_n} \qquad \forall n \in \NN.
\ee 
We call it a {\em flat monomial operator} if there exists some $\tau$ so that $p_n = n + \tau$
for all $n$.
\end{definition}

In \cite{amHI} we showed that every flat monomial operator is in ${\rm AlgLat}(V)$, and hence by Theorem
$\ref{thmrr}$ in the weak closure of $\Az$.
The Volterra operator $V$ is Hilbert-Schmidt, and hence compact. See e.g. \cite{lax02} for a proof.

Let $H^2$ denote the Hardy space of the unit disk, and $k_w(z) = \frac{1}{1-\bar w z}$ the \sz kernel.
We shall let $S : f(z) \mapsto z f(z)$ denote the unilateral shift on $H^2$.
Let 
\[
\beta(z) \= \frac{1}{2-z} ,
\]
and let $C_\beta : f \mapsto f \circ \beta$ denote the composition operator of composing with $\beta$.

There is a unitary $U : L^2 \to H^2$ that is defined on monomials by
\be
\label{eqb2}
U : x^s \mapsto \frac{1}{s+1} k_{\frac{\bar s}{\bar s +1}} ,
\ee
and extended by linearity and continuity to the whole space.
%We call $U$ the $\sz$ transform.
If $T \in B(L^2)$, we shall let $\wt$ denote $U T U^*$.
It is easy to see that $U$ is unitary, as it preserves inner products.
In  \cite{amMS} we prove that $U$ is given by the formula
\be
\label{eqb1}
U f (z) \= \frac{1}{1-z} \int_0^1 f(x) x^{\frac{z}{1-z}} dx ,
\ee
and show that
\beq
\widehat{M_x} &\=& S^* C_\beta^* \\
\widehat{V} &\=& (1 - S^*) C_\beta^* \\
\widehat{H} &=& 1 - S^* .
\eeq

If $X$ is a compact subset of $\C$, we shall let 
$C(X)$ denote the Banach algebra of functions that are continuous on $X$, with the maximum modulus norm.
We shall let 
$A(X)$ denote the subalgebra of functions that are continuous on $X$
and analytic on the interior of $X$, and $P(X)$ denote the closure of the polynomials in $C(X)$. 
A theorem of Mergelyan \cite{mer52} says that if the complement of $X$ is connected, then $A(X) = P(X)$.

\section{The Calkin Hardy-Weyl Algebra}
\label{secb}
 We let $\calk$ denote the ideal of compact operators acting on  $\ltwo$ and set
\[
\calk_0 = \A \cap \calk.
\]
Evidently, $\calk_0$ is a 2-sided ideal in $\A$. Consequently, we may define an algebra $\calc$, the \emph{Calkin Hardy-Weyl algebra}, by
\[
\calc = \A / \calk_0
\]
If $T \in \A$ we let $[T]$ denote the coset of $T$ in $\calc$, i.e.,
\[
[T] = \set{T+K}{ K\in \calk_0}.
\]
\begin{prop}\label{calk.prop.10}
$\calc$ is an abelian Banach algebra.
\end{prop}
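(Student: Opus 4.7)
The plan is to verify the two assertions separately. That $\mathcal{C}$ is a Banach algebra is essentially automatic: $\mathcal{K}_0 = \mathcal{A} \cap \mathcal{K}$ is the intersection of two norm-closed subspaces of $B(L^2)$ (the algebra $\mathcal{A}$ is closed by definition, and $\mathcal{K}$ is closed), so $\mathcal{K}_0$ is a closed two-sided ideal in the Banach algebra $\mathcal{A}$. The quotient of a Banach algebra by a closed two-sided ideal is a Banach algebra in the quotient norm, by standard results.

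For commutativity, I would argue that it suffices to show $[H]$ and $[M_x]$ commute in $\mathcal{C}$, because $\mathcal{A}$ is the norm closure of the unital algebra generated by $H$ and $M_x$, and the quotient map $T \mapsto [T]$ is a continuous unital algebra homomorphism, so $\mathcal{C}$ is the norm closure of the unital algebra generated by $[H]$ and $[M_x]$. Once these two generators commute, every pair of elements in the generated (dense) subalgebra commutes, and commutativity passes to the norm closure.

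To show $[H][M_x] = [M_x][H]$, I would use the commutator identity recorded in the introduction,
\[
HM_x - M_xH \= -HM_xH \= -HV.
\]
The Volterra operator $V$ is compact (being Hilbert--Schmidt, as noted in Section~\ref{secpre}), and $H$ is bounded, so $HV$ is compact. Moreover $HV = HM_xH$ lies in $\mathcal{A}$ since $H, M_x \in \mathcal{A}$. Hence $HM_x - M_xH \in \mathcal{A} \cap \mathcal{K} = \mathcal{K}_0$, which means $[H][M_x] - [M_x][H] = 0$ in $\mathcal{C}$.

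There is no real obstacle: the content of the proposition is just the observation that the generators commute modulo the compacts, which is immediate from the stated commutator relation together with compactness of $V$. The only point worth being careful about is noting that $HV \in \mathcal{A}$ (not merely in $\mathcal{K}$), so that the commutator lies in $\mathcal{K}_0$ and not just in $\mathcal{K}$.
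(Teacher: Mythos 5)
Your proof is correct and takes essentially the same route as the paper: the Banach algebra statement is the standard fact about quotients by closed two-sided ideals, and commutativity reduces to the generators $[H]$ and $[M_x]$ commuting in $\calc$, which rests on the compactness of $V = M_xH$. The only difference is cosmetic: the paper establishes the slightly stronger identities $[M_x][H]=0$ and $[H][M_x]=0$ (using that $M_xH=V$ and $HM_x=(1-H)V$ are both compact and lie in $\A$), which it reuses in later lemmas, whereas you show only that the commutator $-HV$ lies in $\calk_0$ --- which is all this proposition needs, and you rightly check that it lies in $\A$ and not merely in $\K$.
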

\begin{proof}
That $\calc$ is a Banach algebra follows from the fact that $\calk_0$ is closed in $\A$. To see that $\calc$ is abelian, observe that as $M_x$ and $H$ generate $\A$,
$[M_x]$ and $[H]$ generate $\calc$. Furthermore, as $M_xH = V \in \calk_0$,
\be\label{calk.10}
[M_x][H] = 0.
\ee
Likewise, as $HM_x =(1-H)V \in \calk_0$,
\be\label{calk.20}
[H][M_x] =0,
\ee
so that in particular we have that
\[
[M_x][H]=[H][M_x].
\]
As $[M_x]$ and $[H]$ commute and generate $\calc$, $\calc$ is abelian.
\end{proof}

\subsection{A Uniform Algebra Homeomorphically Isomorphic to $\calc$}
\label{secc2}

We begin by defining an algebra by gluing together two simpler algebras whose maximal ideal spaces overlap at a single point. Let
\[
\P \=\{\ f=(f_-,f_+)\ :\ f_-\in C([-1,0]),\ f_+ \in A(\overline{\d(1,1)}) \text{ and } f_-(0)=f_+(0)\ \}
\]
where we view $\P$ as an algebra with the operations
\[
cf=(cf_-,cf_+),\ f+g=(f_-+g_-,f_++g_+), \text{and } fg=(f_-g_-,f_+g_+),
\]
and the norm
\[
\norm{f} = \max \big \{\max_{t\in [-1,0]} |f_-(t)|, \max_{z\in (\overline{\d(1,1)}}|f_+(z)|\ \big\}.
\]
We abuse notation by letting
\[
f(0)=f_-(0)
\]
 when $f\in \P$.

We note that if $f\in C([-1,0])$, then as $-M_x$ is self-adjoint and has spectrum equal to $[-1,0]$,  we may form the operator $f(-M_x)$. Likewise, as $H$ is cosubnormal and has spectrum equal to $\overline{\d(1,1)}$, if $g\in A(\overline{\d(1,1)})$, then we may form the operator $g(H)$. Concretely,
\[
f(-M_x) = M_{f(-x)}\ \text{ and }\ \widehat{g(H)}=M_h^*,
\]
where $h(z) = \overline{g(1-\bar z)}$, and $M_h$ denotes multiplication by $h$.
\begin{lem}\label{calk.lem.10}
If $f\in C([-1,0])$ and $g\in A(\overline{\d(1,1)})$, then
\[
[f(-M_x)][g(H)]=g(0)[f(-M_x)]+f(0)[g(H)]-f(0)g(0).
\]
\end{lem}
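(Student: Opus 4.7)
The plan is first to reduce the identity, by simple algebra, to the special case where both functions vanish at $0$. Setting $\tilde f = f - f(0) \in C([-1,0])$ and $\tilde g = g - g(0) \in A(\overline{\D(1,1)})$, one expands
\[
[f(-M_x)][g(H)] = \bigl([\tilde f(-M_x)] + f(0)\bigr)\bigl([\tilde g(H)] + g(0)\bigr)
\]
and regroups. Comparing with the right hand side of the claimed identity, everything cancels except the cross term $[\tilde f(-M_x)][\tilde g(H)]$. So the lemma is equivalent to
\[
[\tilde f(-M_x)][\tilde g(H)] = 0 \quad \text{whenever } \tilde f(0) = \tilde g(0) = 0.
\]

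To prove this, I would approximate by polynomials that vanish at $0$. The Weierstrass theorem gives polynomials $p_n$ with $p_n(0) = 0$ converging to $\tilde f$ uniformly on $[-1,0]$. Since $\overline{\D(1,1)}$ has connected complement, Mergelyan's theorem gives $A(\overline{\D(1,1)}) = P(\overline{\D(1,1)})$, so after subtracting constant terms I obtain polynomials $q_n$ with $q_n(0) = 0$ converging uniformly to $\tilde g$. For any polynomials $p, q$ with no constant term, $p(-M_x)q(H)$ is a linear combination of terms $(-M_x)^k H^m$ with $k, m \geq 1$; relation \eqref{calk.10} gives $[M_x]^k[H]^m = [M_x]^{k-1}\bigl([M_x][H]\bigr)[H]^{m-1} = 0$ for such $k, m$, so $[p(-M_x)][q(H)] = 0$ in $\calc$.

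Passing to the limit requires that the two functional calculi be norm continuous with respect to the sup norm. For $f \mapsto f(-M_x) = M_{f(-\,\cdot\,)}$ this is trivial, with equality of the norms. For $g \mapsto g(H)$, one uses that $\widehat H = 1 - S^*$ is the adjoint of the subnormal operator $1 - S$, which has spectrum $\overline{\D(1,1)}$; for a polynomial $q$ this yields $\|q(H)\| = \|\bar q(1 - S)\| \leq \|q\|_{\overline{\D(1,1)}}$, and the bound extends by Mergelyan to all of $A(\overline{\D(1,1)})$. With both calculi continuous, $p_n(-M_x)q_n(H) \to \tilde f(-M_x)\tilde g(H)$ in operator norm, so in the quotient $[p_n(-M_x)][q_n(H)] \to [\tilde f(-M_x)][\tilde g(H)]$; since every term on the left is $0$, so is the limit.

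The only step with any real content is the sup-norm bound $\|g(H)\| \leq \|g\|_\infty$ on the Hardy operator's analytic functional calculus, which rests on the subnormality of $1 - S$ combined with Mergelyan; everything else is a direct application of \eqref{calk.10} together with polynomial approximation.
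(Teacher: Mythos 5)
Your proof is correct and follows essentially the same route as the paper: reduce to polynomials using the norm-continuity (contractivity) of the two functional calculi, then kill the cross terms with the relation $[M_x][H]=0$. The only cosmetic difference is that you first normalize to $f(0)=g(0)=0$ and argue monomial by monomial, whereas the paper splits off the constant terms as $f=f(0)+xf_1$, $g=g(0)+xg_1$ and invokes the spectral-set property of $-M_x$ and $H$ directly, which is exactly the bound you derive from cosubnormality and Mergelyan.
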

\begin{proof}
Since $[-1,0]$ is a spectral set for $-M_x$ and $\overline{\d(1,1)}$ is a spectral set for $H$ it suffices to prove the lemma in the special case when $f$ and $g$ are polynomials. Let $f(x)=f(0)+xf_1(x)$ and $g(x)=g(0)+xg_1(x)$. Using \eqref{calk.10} and \eqref{calk.20} we see that
\begin{align*}
[f(-M_x)][g(H)]&=\big([f(0)]+[-M_x][f_1(-M_x)]\big)\big([g(0)]+[H][g_1(H)]\big)\\  \\
&=f(0)g(0)+g(0)[-M_x][f_1(-M_x)]+f(0)[H][g_1(H)]\\  \\
&=f(0)g(0)+g(0)[f(-M_x)-f(0)]+f(0)[g(H)-g(0)]\\ \\
&=g(0)[f(-M_x)]+f(0)[g(H)]-f(0)g(0).
\end{align*}
\end{proof}
If $f \in \P$ we define $\gamma(f) \in \A$ by the formula
\[
\gamma(f) = f_-(-M_x) + f_+(H)-f(0).
\]
We also define $\Gamma:\P \to \calc$ by the formula
\[
\Gamma(f) = [\gamma(f)]
\]
\begin{prop}\label{calk.prop.20}
$\Gamma$ is a continuous unital homomorphism.
\end{prop}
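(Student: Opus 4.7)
The plan is to verify the four defining properties of a continuous unital homomorphism in order of increasing difficulty. That $\Gamma$ is unital is immediate, since for $f = (1,1)$ we have $\gamma(f) = I + I - 1 = I$. Additivity follows from the linearity of the continuous functional calculus (for $-M_x$) and of the analytic functional calculus (for $H$), combined with the coordinatewise definition of addition on $\P$. Continuity follows from the standard functional-calculus norm bounds: since $-M_x$ is self-adjoint with spectrum $[-1,0]$, we have $\|f_-(-M_x)\| \leq \|f_-\|_{C([-1,0])}$; and since $\overline{\D(1,1)}$ is a spectral set for $H$, we have $\|f_+(H)\| \leq \|f_+\|_{A(\overline{\D(1,1)})}$. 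Together these give $\|\Gamma(f)\| \leq \|\gamma(f)\| \leq 3\|f\|$.

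The real content is multiplicativity. My strategy is to expand $\gamma(f)\gamma(g)$ as a sum of nine terms and then reduce modulo $\calk_0$ using the tools already assembled. The two ``pure'' products $f_-(-M_x)g_-(-M_x) = (f_-g_-)(-M_x)$ and $f_+(H)g_+(H) = (f_+g_+)(H)$ are handled by the multiplicativity of each functional calculus. The two ``mixed'' products are precisely what Lemma \ref{calk.lem.10} is designed for: the lemma rewrites $[f_-(-M_x)][g_+(H)]$ as $g(0)[f_-(-M_x)] + f(0)[g_+(H)] - f(0)g(0)$, and after first swapping factors using the abelianness of $\calc$ established in Proposition \ref{calk.prop.10}, it also rewrites $[f_+(H)][g_-(-M_x)]$ as $f(0)[g_-(-M_x)] + g(0)[f_+(H)] - f(0)g(0)$. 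The remaining scalar-multiple terms in the expansion of $\gamma(f)\gamma(g)$ (those of the form $-g(0)f_\pm$ and $-f(0)g_\pm$) are designed precisely to cancel the linear-in-$[\,\cdot\,]$ contributions from the two lemma applications, leaving
\[
[\gamma(f)\gamma(g)] = [(f_-g_-)(-M_x)] + [(f_+g_+)(H)] - f(0)g(0),
\]
which is exactly $\Gamma(fg)$, since $(fg)_\pm = f_\pm g_\pm$ and $(fg)(0) = f(0)g(0)$.

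The main obstacle, mild as it is, is the bookkeeping in the multiplicativity computation: tracking signs carefully and checking that the scalar summands $-f(0)$ and $-g(0)$ built into the definition of $\gamma$ absorb exactly the unwanted linear terms produced by the two applications of Lemma \ref{calk.lem.10}. No further analytic input is needed beyond the two functional calculi, the abelianness of $\calc$, and Lemma \ref{calk.lem.10}.
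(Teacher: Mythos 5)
Your proposal is correct and follows essentially the same route as the paper: the unital, linear, and continuity parts via the same $3\|f\|$ bound, and multiplicativity by expanding $\gamma(f)\gamma(g)$, handling the pure products with the two functional calculi, the mixed products with Lemma \ref{calk.lem.10}, and observing that the scalar terms built into $\gamma$ cancel the leftover linear terms. The only cosmetic difference is that you invoke abelianness of $\calc$ to reorder one mixed product, whereas the paper simply writes both mixed terms with the $-M_x$ factor first; the substance is identical.
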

\begin{proof}
$\gamma$ is linear and $\gamma(1)=1$. Therefore, $\Gamma$ is linear and $\Gamma(1)=1$. Also,
\begin{align*}
\norm{\Gamma(f)}&=\norm{[\gamma(f)]}\\
&\le \norm{\gamma(f)}\\
&=\norm{f_-(-M_x) + f_+(H)-f(0)}\\
&\le\norm{f_-(-M_x)} +\norm{f_+(H)}+|f(0)|\\
&=\max_{t\in [-1,0]}|f_-(t)|+\max_{z\in \overline{\d(1,1)}}|f_+(z)|+|f(0)|\\
%&=\norm{f}+|f(0)|\\
&\le 3\norm{f},
\end{align*}
so $\Gamma$ is continuous.

%\blue
%Should this 3 be 2 ?
%\black

Finally, to see that $\Gamma$ preserves products, fix $f,g \in \P$.
\begin{align*}
\Gamma(f)\Gamma(g)&=[\gamma(f)]\ [\gamma(g)]\\ \\
&=[f_-(-M_x)+f_+(H)-f(0)]\ [g_-(-M_x)+g_+(H)-g(0)]\\ \\
&=\Big([f_-(-M_x)][g_-(-M_x)]+[f_+(H)][g_+(H)]\Big)\\ \\
&\ \  +\Big([f_-(-M_x)][g_+(H)]+[g_-(-M_x)][f_+(H)]\Big)\\ \\
&\ \  -\Big(f(0)[g_-(-M_x)+g_+(H)]+g(0)[f_-(-M_x)+f_+(H)]\Big)\\ \\
&\ \ +f(0)g(0)\\ \\
&=\ \ A+B-C+f(0)g(0).
\end{align*}
But
\begin{align*}
A&=\Big([f_-(-M_x)][g_-(-M_x)]+[f_+(H)][g_+(H)]\Big)\\
&=\Big([f_-g_-(-M_x)]+[f_+g_+(H)]-f(0)g(0)\Big)+f(0)g(0)\\
&=[\gamma(fg)]+f(0)g(0)\\
&=\Gamma(fg)+f(0)g(0),
\end{align*}
and using Lemma \ref{calk.lem.10}, we see that
\begin{align*}
B&=\Big([f_-(-M_x)][g_+(H)]\Big)+\Big([g_-(-M_x)][f_+(H)]\Big)\\
&=\Big(g(0)[f_-(-M_x)]+[f(0)g_+(H)]-f(0)g(0)\Big)+
\Big(f(0)[g_-(-M_x)]+[g(0)f_+(H)]-f(0)g(0)\Big)\\
&=C-2f(0)g(0).
\end{align*}
Therefore,
\begin{align*}
\Gamma(f)\Gamma(g)&= A+B-C+f(0)g(0)\\
&=(\Gamma(fg)+f(0)g(0))+(C-2f(0)g(0))-C+f(0)g(0)\\
&=\Gamma(fg).
\end{align*}
\end{proof}
\begin{lem}\label{calk.lem.20}
If $p$ is a polynomial in two variables and we define $f\in \P$ by letting
\[
f_-(t) = p(t,0),\ t\in[-1,0]\ \ \ \ \text{and}\ \ \ \ f_+(z)=p(0,z),\ z \in \overline{\d(1,1)},
\]
then
\[
p([-M_x],[H]) = \Gamma(f).
\]
\end{lem}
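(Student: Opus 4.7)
The plan is to expand $p(t,z) = \sum_{j,k \ge 0} a_{jk}\, t^j z^k$ and exploit the relations already in hand from Proposition \ref{calk.prop.10}, namely $[M_x][H] = 0$ and $[H][M_x] = 0$. Since $[-M_x][H] = 0 = [H][-M_x]$ and $\calc$ is abelian, any mixed monomial $[-M_x]^j [H]^k$ with both $j \ge 1$ and $k \ge 1$ collapses to $0$ in $\calc$. Thus
\[
p([-M_x],[H]) \= a_{00} + \sum_{j \ge 1} a_{j0} [-M_x]^j + \sum_{k \ge 1} a_{0k} [H]^k.
\]

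On the other side, I would read off directly from the definitions that $f_-(t) = a_{00} + \sum_{j \ge 1} a_{j0} t^j$ and $f_+(z) = a_{00} + \sum_{k \ge 1} a_{0k} z^k$, so in particular $f(0) = f_-(0) = f_+(0) = a_{00}$ (which also verifies that $f$ really is an element of $\P$). Then
\[
\gamma(f) \= f_-(-M_x) + f_+(H) - f(0) \= a_{00} + \sum_{j \ge 1} a_{j0} (-M_x)^j + \sum_{k \ge 1} a_{0k} H^k,
\]
where the single $a_{00}$ on the right arises because the constant term appears once in each of $f_-(-M_x)$ and $f_+(H)$ and one copy is removed by subtracting $f(0)$. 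Passing to the Calkin algebra gives exactly the expression displayed above for $p([-M_x],[H])$, so $\Gamma(f) = [\gamma(f)] = p([-M_x],[H])$.

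There is no real obstacle here: the lemma is a direct bookkeeping consequence of the two annihilation identities \eqref{calk.10}, \eqref{calk.20}, and Lemma \ref{calk.lem.10} is not even needed because we are only expanding $p$ into monomials in $[-M_x]$ and $[H]$ separately rather than multiplying two elements of $\gamma(\P)$. The only point that requires care is the treatment of the constant term $a_{00}$, which must be counted once on each side, which is precisely why the subtraction of $f(0)$ in the definition of $\gamma$ is the correct normalization.
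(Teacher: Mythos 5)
Your proof is correct and is essentially the paper's argument: the paper's decomposition $p(x,y)=p(x,0)+p(0,y)-p(0,0)+q(x,y)$ with $q([-M_x],[H])=0$ is just your monomial bookkeeping packaged in one line, both resting on $[M_x][H]=[H][M_x]=0$ killing the mixed terms. The handling of the constant term $a_{00}$ and the observation that Lemma \ref{calk.lem.10} is not needed also match the paper.
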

\begin{proof}
If $p=p(x,y)$ is a polynomial in two variables and we let
\[
q(x,y)=p(x,y)-p(x,0)-p(0,y)+p(0,0),
\]
then
\[
p(x,y)=p(x,0)+p(0,y)-p(0,0) + q(x,y)
\]
and
\[
q([-M_x],[H])=0.
\]
Therefore,
\begin{align*}
p([-M_x],[H])&=p([-M_x],0)+p(0,[H])-p(0,0)\\
&=f_- ([M_x])+f_+([h])-f(0)\\
&=[\gamma(f)]\\
&=\Gamma(f).
\end{align*}
\end{proof}
\begin{cor}\label{calk.cor.10}
The range of 
$ \Gamma$ is dense in $\calc$.
\end{cor}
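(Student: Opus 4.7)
The plan is to exploit Lemma \ref{calk.lem.20} together with the fact that $\calc$ is generated, as a Banach algebra, by the two cosets $[M_x]$ and $[H]$. Concretely, by the definition of $\A$ the polynomials in $M_x$ and $H$ are norm-dense in $\A$, and because the quotient map $\A \to \calc$ is a continuous algebra homomorphism, the polynomials in $[M_x]$ and $[H]$ (equivalently, in $[-M_x]$ and $[H]$) are norm-dense in $\calc$.

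Given any such polynomial $p([-M_x],[H])$, I would invoke Lemma \ref{calk.lem.20} to exhibit an element $f \in \P$, built simply as $f_-(t)=p(t,0)$ on $[-1,0]$ and $f_+(z)=p(0,z)$ on $\overline{\d(1,1)}$, for which $\Gamma(f) = p([-M_x],[H])$. Hence every polynomial in $[-M_x]$ and $[H]$ lies in $\operatorname{ran}\Gamma$.

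Since a dense subset of $\calc$ is contained in $\operatorname{ran}\Gamma$, the range is dense, proving the corollary. There is essentially no obstacle here beyond combining the two preceding results; the real content has already been packaged into Proposition \ref{calk.prop.10} (abelianness and the fact that $[M_x]$ and $[H]$ generate $\calc$) and Lemma \ref{calk.lem.20} (which converts any bivariate polynomial in $[-M_x]$ and $[H]$ into an image under $\Gamma$).
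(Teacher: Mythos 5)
Your proposal is correct and follows exactly the paper's route: the paper also deduces the corollary from Lemma \ref{calk.lem.20} combined with the observation (from the proof of Proposition \ref{calk.prop.10}) that $[-M_x]$ and $[H]$ generate $\calc$, so that polynomials in these cosets are dense and each lies in $\operatorname{ran}\Gamma$. You have merely spelled out the density step that the paper leaves implicit.
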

\begin{proof}
This follows immediately from Lemma \ref{calk.lem.20} by recalling that $[-M_x]$ and $[H]$ generate $\calc$ (cf. proof of Proposition \ref{calk.prop.10}).
\end{proof}
Lemma \ref{calk.lem.20} suggests that we consider the subset $\P_0$ of $\P$ defined by
\[
\P_0=\set{f\in \P}{f_- \text{ and } f_+ \text{ are polynomials}}.
\]
We note that it  follows from the facts that the polynomials are dense in both $C([-1,0])$ and $A(\overline{\d(1,1)})$ that $\P_0$ is dense in $\P$.
\begin{lem}\label{calk.lem.30}
If $s\in [-1,0]$, then
\be\label{calk.30}
|f_-(s)| \le \norm{\Gamma(f)}
\ee
for all $f\in \P$.
\end{lem}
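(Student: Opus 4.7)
The plan is to produce, for each $s \in [-1, 0)$, a sequence of unit vectors $(e_n) \subset L^2$ which act as approximate eigenvectors for $\gamma(f)$ with approximate eigenvalue $f_-(s)$ and along which every compact operator vanishes in norm. This yields, for any $K \in \K_0$,
\[
|f_-(s)| = \lim_n \|(\gamma(f) + K) e_n\| \le \|\gamma(f) + K\|,
\]
so taking the infimum over $K \in \K_0$ gives $|f_-(s)| \le \|\Gamma(f)\|$. The case $s = 0$ then follows from continuity of $f_-$ at $0$.

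For the construction, fix $s \in [-1, 0)$ and let $e_n = c_n \chi_{I_n}$, where $I_n = [-s - 1/n, -s + 1/n] \cap [0,1]$ and $c_n$ is chosen so $\|e_n\|_2 = 1$; note $-s \in (0,1]$. I would first observe that $e_n \to 0$ weakly (so $K e_n \to 0$ in norm for every compact $K$), and that $f_-(-M_x) e_n - f_-(s) e_n \to 0$ in $L^2$, since $f_-(-M_x)$ is multiplication by the continuous function $x \mapsto f_-(-x)$, which is uniformly close to $f_-(s)$ on the shrinking support of $e_n$.

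The key analytic step is to show $H e_n \to 0$ in norm. The formula $He_n(x) = \frac{1}{x}\int_0^x e_n(t)\,dt$ vanishes for $x < -s - 1/n$ and is bounded by $\|e_n\|_{L^1}/x = O(n^{-1/2})/x$ elsewhere; since $-s > 0$, this pointwise bound integrates to $\|He_n\|_2 \to 0$. Consequently $p(H) e_n - p(0) e_n \to 0$ for every polynomial $p$. Because the complement of $\overline{\D(1,1)}$ is connected, Mergelyan's theorem gives $A(\overline{\D(1,1)}) = P(\overline{\D(1,1)})$, and a standard uniform approximation argument (using that $\overline{\D(1,1)}$ is a spectral set for $H$, so $\|p_j(H) - f_+(H)\| \le \|p_j - f_+\|_\infty$) upgrades polynomial convergence to $f_+(H) e_n - f_+(0) e_n \to 0$ in $L^2$.

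Putting the pieces together, since $f(0) = f_-(0) = f_+(0)$,
\[
\|\gamma(f) e_n - f_-(s) e_n\| \le \|f_-(-M_x) e_n - f_-(s) e_n\| + \|f_+(H) e_n - f_+(0) e_n\| \longrightarrow 0,
\]
which delivers the inequality above. The main obstacle I anticipate is the norm decay $\|He_n\|_2 \to 0$; the need to keep $-s$ bounded away from $0$ to control the singularity $1/x$ is exactly why the boundary value $s = 0$ must be obtained separately by continuity rather than directly by this approximate-eigenvector argument.
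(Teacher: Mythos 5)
Your proposal is correct and follows essentially the same route as the paper's proof: normalized indicator bumps concentrated at the relevant point (you correctly center them at $-s$ and treat $s=0$ by continuity, where the paper reduces to interior points the same way), weak null convergence to annihilate compacts, norm decay of $He_n$ because the $1/x$ singularity stays away from the shrinking support, and a polynomial-to-$A(\overline{\D(1,1)})$ approximation step before taking the infimum over $K$ to reach the quotient norm. The only cosmetic differences are that you argue with norm convergence and a direct integral estimate for $\|He_n\|$ (the paper uses quadratic forms $\langle(\gamma(f)+K)\chi_n,\chi_n\rangle$ and the compactness of $V$), and you approximate $f_+$ directly via the spectral-set inequality for $H$ where the paper instead proves the bound for $f\in\P_0$ and invokes continuity of $\Gamma$ together with density of $\P_0$ in $\P$.
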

\begin{proof}
As $f$ is continuous, it suffices to prove the lemma under the assumption that $s\in (-1,0)$. For $n$ satisfying $1/n < \min \{s, 1-s\}$ we define a unit vector $\chi_n \in \ltwo$ by the formula
\[
\chi_n(t)=
\twopartdef{\sqrt{\frac{n}{2}}}{|t-s|\le 1/n}{0}{|t-s|>1/n}
\]
We observe  that the mean value theorem for integrals implies that
\[
\lim_{n\to \infty}\ip{g\, \chi_n}{\chi_n} = g(s)
\]
whenever $g \in C([0,1])$. Also, as $\chi_n \to 0$ weakly,
\[
\lim_{n\to \infty} \norm{K\chi_n} =0
\]
whenever $K$ is a compact operator acting on $\ltwo$. In particular, as $V$ is compact and $V\chi_n(t)=0$ when $t\in [0,s-1/n)$,
\[
\lim_{n\to \infty} \norm{H\chi_n} =\lim_{n\to \infty} \norm{M_{1/x}V\chi_n} =0.
\]
More generally, if $q$ is a polynomial and $q(0)=0$, write $q(z) = z r(z)$, and we get
\[
\lim_{n\to \infty} \norm{q(H)\chi_n}=\lim_{n\to \infty} \norm{r(H)\ H\chi_n}=0.
\]

Now fix $f\in \P_0$ and a compact operator $K$ acting on $\ltwo$. Using the observations in the previous paragraph we have that
\beq
\ip{(\gamma(f)+K)\ \chi_n}{\chi_n}
&=\ &\ip{(f_-(-M_x) +f_+(H)-f(0)+K)\ \chi_n}{\chi_n}\\
&=\ &\ip{f_-(-x)\chi_n}{\chi_n}+\ip{(f_+-f_+(0))(H)\ \chi_n}{\chi_n}+\ip{K\ \chi_n}{\chi_n}\\
&\to\ & \ \ \ \ \ f_-(s)\qquad\qquad +\qquad\qquad\ \  0\qquad\qquad\qquad\  +\qquad 0\\
&=\ &\ \ f_-(s).
\eeq
Therefore, as $\norm{\chi_n}=1$,
\[
|f_-(s)|\le \norm{\gamma(f)+K}
\]
for all $f\in \P_0$ and $K$ any compact operator acting on $\ltwo$.
Hence,
\[
|f_-(s)|\le \inf_{K\in\K_0}\norm{\gamma(f)+K}=\norm{\Gamma(f)}
\]
for all $f\in \P_0$. As $\Gamma$ is continuous and $\P_0$ is dense in $\P$, it follows that \eqref{calk.30} holds for all $f \in \P$.
\end{proof}
\begin{lem}\label{calk.lem.40}
If $z\in \overline{\d(1,1)}$, then
\be\label{calk.40}
|f_+(z)| \le \norm{\Gamma(f)}
\ee
for all $f\in \P$.
\end{lem}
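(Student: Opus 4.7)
The plan is to mirror the proof of Lemma~\ref{calk.lem.30}, but now replace the spatially localized bumps $\chi_n$ with unit vectors $\psi_n \in \ltwo$ that are approximate eigenvectors of $H$ with eigenvalue $z$, are weakly null, and still have $M_x$-expectations tending to $f_-(0)$. I will first establish the bound for $z \in \partial\d(1,1)\setminus\{0\}$, then extend to the rest of $\overline{\d(1,1)}$ by continuity of $f_+$ together with the maximum modulus principle (which applies because $f_+ \in A(\overline{\d(1,1)})$).

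The key observation is that for any $z \in \d(1,1)$, the function $x^{1/z-1}$ is an honest eigenvector of $H$ with eigenvalue $z$: a direct integration shows $H(x^s) = x^s/(s+1)$ whenever $\re(s) > -\tfrac12$, and $\re(1/z) > \tfrac12$ throughout $\d(1,1)$. Fix $z\in\partial\d(1,1)$ with $z\neq 0$, pick $z_n\in\d(1,1)$ with $z_n\to z$, set $\alpha_n = 2\re(1/z_n)-1$ (so $\alpha_n>0$ and $\alpha_n\to 0^+$), and define
\[
\psi_n \= \sqrt{\alpha_n}\, x^{1/z_n-1}.
\]
Then $\|\psi_n\|=1$ and $H\psi_n=z_n\psi_n$, so $g(H)\psi_n = g(z_n)\psi_n \to g(z)\psi_n$ first for polynomial $g$ and then, via Mergelyan applied to $\overline{\d(1,1)}$, for every $g\in A(\overline{\d(1,1)})$. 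The two remaining asymptotic facts to verify are that $\psi_n\to 0$ weakly and that $\ip{f_-(-M_x)\psi_n}{\psi_n}\to f_-(0)$. Weak nullity is checked on the dense set of monomials $\phi=x^m$ by computing $\ip{\psi_n}{\phi}=\sqrt{\alpha_n}/(m+1/z_n)$, where the denominator stays bounded away from $0$ because $z\neq 0$ and $m\ge 0$. The $M_x$-expectation equals $\int_0^1 f_-(-x)\,\alpha_n x^{\alpha_n-1}\,dx$; the probability measures $\alpha_n x^{\alpha_n-1}dx$ on $[0,1]$ concentrate weakly at $0$ as $\alpha_n\to 0^+$ (substitute $u=x^{\alpha_n}$ and apply dominated convergence), so the integral tends to $f_-(0)$.

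Putting these pieces together with $\ip{K\psi_n}{\psi_n}\to 0$ for any compact $K$ on $\ltwo$, we get for every $f\in\P$ that
\[
\ip{(\gamma(f)+K)\psi_n}{\psi_n} \ \to\ f_-(0) + f_+(z) - f(0) \= f_+(z),
\]
hence $|f_+(z)|\le\|\gamma(f)+K\|$, and taking the infimum over $K$ gives $|f_+(z)|\le\|\Gamma(f)\|$ for every $z\in\partial\d(1,1)\setminus\{0\}$. The bound at $z=0$ follows from Lemma~\ref{calk.lem.30} at $s=0$ since $f_+(0)=f_-(0)$; then the maximum modulus principle for $f_+\in A(\overline{\d(1,1)})$ yields $|f_+(z)|\le\max_{w\in\partial\d(1,1)}|f_+(w)|\le\|\Gamma(f)\|$ for every $z\in\overline{\d(1,1)}$. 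The main obstacle is the tension between the two localizations required simultaneously: no single eigenvector of $H$ on $\ltwo$ is concentrated at $x=0$, so one must push $z_n$ out to the boundary circle to force $\alpha_n\to 0^+$ and collapse the Beta-type measures $\alpha_n x^{\alpha_n-1}dx$ to $\delta_0$; the degenerate boundary point $z=0$ (where $\alpha_n$ would instead diverge) is exactly where this eigenvector construction breaks down and must be covered by Lemma~\ref{calk.lem.30}.
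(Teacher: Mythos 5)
Your proof is correct and is essentially the paper's own argument: your normalized vectors $\sqrt{\alpha_n}\,x^{1/z_n-1}$ are exactly the paper's $\Upsilon_\alpha$ (there obtained as $U^*$ of normalized Szeg\H{o} kernels and then computed to be these same power functions), and the three ingredients---eigenvector identity for $f_+(H)$, concentration of $|\psi_n|^2\,dx$ at $x=0$ to kill the $f_-$ part, weak nullity to kill compacts---followed by the maximum modulus reduction match the paper step for step. The only (harmless) cosmetic differences are that you work with general $f\in\P$ directly instead of passing through $\P_0$ and density, and you cover the boundary point $z=0$ explicitly via Lemma \ref{calk.lem.30} rather than by continuity.
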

\begin{proof}
We first observe that as $f_+ \in A(\overline{\d(1,1)})$, by the Maximum Modulus Theorem it suffices to prove the lemma under the assumption that $z=1+\tau$ where $\tau \in \t \setminus \{-1\}$. For $\alpha \in \d$, let
\[
\Upsilon_{\alpha} = U^* \frac{k_{-\bar\alpha}}{\norm{k_{-\bar\alpha}}},
\]
where $U$ is as in \eqref{eqb2}.
Clearly, as $k_{-\bar\alpha}/\norm{k_{-\bar\alpha}}$ is a unit vector and $U^*$ is unitary, $\Upsilon_{\alpha}$ is a unit vector. Also, as
\[
(1-S^*)\frac{k_{-\bar\alpha}}{\norm{k_{-\bar\alpha}}}=
(1+\alpha)\frac{k_{-\bar\alpha}}{\norm{k_{-\bar\alpha}}},
\]
it follows that $H\Upsilon_{\alpha}=(1+\alpha)\Upsilon_{\alpha}$, and more generally,
\be\label{calk.50}
f_+(H)\Upsilon_{\alpha} \= f_+(1+\alpha)\Upsilon_{\alpha}
\ee
for all $f\in \P$.

Now notice that \eqref{eqb2} implies that
\[
\Upsilon_{\alpha}\=
\frac{\sqrt{1-|\alpha|^2}}{1+\alpha}x^{-\frac{\alpha}{1+\alpha}}.
\]
\begin{claim}\label{calk.claim.10}
If $\rho>0$ and $\tau \in \t \setminus \{-1\}$, then
\be\label{calk.60}
\lim_{\alpha \to \tau}\ip{x^\rho \Upsilon_{\alpha}}{\Upsilon_{\alpha}} =0.
\ee
\end{claim}
\begin{proof}
First note that
\[
\rho -(\frac{\alpha}{1+\alpha}+\frac{\bar\alpha}{1+\bar\alpha})+1 =\rho+\frac{1-|\alpha|^2}{|1+\alpha|^2},
\]
so that
\[
\int_0^1x^{\rho-(\frac{\alpha}{1+\alpha}+\frac{\bar\alpha}{1+\bar\alpha})} dx
\=\Big(\rho+\frac{1-|\alpha|^2}{|1+\alpha|^2}\Big)^{-1}.
\]
Hence,
\begin{align*}
\ip{x^\rho \Upsilon_{\alpha}}{\Upsilon_{\alpha}}&=\frac{1-|\alpha|^2}{|1+\alpha|^2}
\int_0^1x^{\rho-(\frac{\alpha}{1+\alpha}+\frac{\bar\alpha}{1+\bar\alpha})}dx \\
&=\frac{1-|\alpha|^2}{|1+\alpha|^2}\Big(\rho+\frac{1-|\alpha|^2}{|1+\alpha|^2}\Big)^{-1}\\
&=\frac{1-|\alpha|^2}{|1 +\alpha|^2\rho\ +1-|\alpha|^2}.
\end{align*}
Therefore, if $\rho>0$ and $\tau \in \t \setminus \{-1\}$, \eqref{calk.60} holds.
\end{proof}
Observe that if $q$ is a polynomial and $\tau \in \t \setminus \{-1\}$, then Claim \ref{calk.claim.10} implies that $\ip{q\Upsilon_{\alpha}}{\Upsilon_{\alpha}}\to q(0)$ as $\alpha \to \tau$. In particular,
\be\label{calk.70}
\lim_{\alpha \to \tau}\ip{q(-M_x)\Upsilon_{\alpha}}{\Upsilon_{\alpha}}=0
\ee
whenever $\tau \in \t \setminus \{-1\}$ and $q$ is a polynomial satisfying $q(0)=0$.

We now conclude the proof of the lemma. We need to show that if $f \in \P$ and $\tau \in \t\setminus \{-1\}$ then \eqref{calk.40} holds with $z=1+\tau$. First assume that $f\in \P_0$ and fix $K \in \calk_0$. Since $\Upsilon_{\alpha} \to 0$ weakly as $\alpha \to \tau$, using \eqref{calk.50} and \eqref{calk.60} we have
\beq
\ip{(\gamma(f)+K)\Upsilon_{\alpha}}{\Upsilon_{\alpha}}
&\=&\ip{(f_--f_-(0))(-M_x)\Upsilon_{\alpha}}{\Upsilon_{\alpha}}+
\ip{f_+(H)\Upsilon_{\alpha}}{\Upsilon_{\alpha}}+\ip{K\Upsilon_{\alpha}}{\Upsilon_{\alpha}}\\
&\to &\qquad\qquad\qquad 0 \qquad\qquad\qquad  +\ \ \ \ f_+(1+\tau)\ \ \ \
+\ \ 0\\
&=&f_+(1+\tau).
\eeq
as $\alpha \to \tau$. Therefore, if $f \in \P_0$ and $\tau \in \t\setminus \{-1\}$,
\[
|f_+(1+\tau)| \le \norm{\gamma(f)+K}.
\]
Hence, if $f \in \P_0$,
\[
|f_+(1+\tau)| \le \inf_{K\in K_0}\norm{\gamma(f)+K} = \norm{\Gamma(f)}.
\]
As $\Gamma$ is continuous and $\P_0$ is dense in $\P$, it follows that \eqref{calk.40} holds with $z=1+\tau$ for all $f \in \P$.
\end{proof}
\begin{lem}\label{calk.lem.50}
$\Gamma$ is a homeomorphism.
\end{lem}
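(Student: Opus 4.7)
The plan is to assemble the three pieces already established. By Proposition \ref{calk.prop.20} we have the upper estimate $\|\Gamma(f)\| \le 3 \|f\|$, so $\Gamma$ is already known to be continuous; what remains is to produce a lower estimate, to deduce injectivity together with closedness of the range, and then to invoke Corollary \ref{calk.cor.10} for surjectivity.

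First I would combine Lemmas \ref{calk.lem.30} and \ref{calk.lem.40}. Those lemmas give, for every $f \in \P$, the pointwise bounds $|f_-(s)| \le \|\Gamma(f)\|$ for all $s \in [-1,0]$ and $|f_+(z)| \le \|\Gamma(f)\|$ for all $z \in \overline{\D(1,1)}$. Taking the maximum over $s$ and $z$ and then the max of these two quantities recovers exactly the norm on $\P$, so that
\[
\|f\| \;\le\; \|\Gamma(f)\| \;\le\; 3\|f\|
\]
for every $f \in \P$. This is the crux of the argument, but it is immediate from the two lemmas.

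From the left inequality, $\Gamma$ is bounded below, hence injective. A standard Banach-space argument then shows that $\Gamma(\P)$ is closed in $\calc$: if $\Gamma(f_n) \to T$ in $\calc$, then $(f_n)$ is Cauchy in $\P$ by the lower bound, its limit $f$ lies in $\P$ (which is complete, being a closed subalgebra of $C([-1,0]) \oplus A(\overline{\D(1,1)})$), and by continuity $\Gamma(f) = T$. Combined with Corollary \ref{calk.cor.10}, which gives density of $\Gamma(\P)$ in $\calc$, we conclude $\Gamma(\P) = \calc$, so $\Gamma$ is a bijection.

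Finally, the two-sided estimate displayed above states that $\Gamma$ and $\Gamma^{-1}$ are both bounded linear maps, so $\Gamma$ is a homeomorphism, as required. I do not anticipate a real obstacle here, since all the analytic work---producing the test vectors $\chi_n$ on the stick and the vectors $\Upsilon_\alpha$ on the disk---has already been carried out in the preceding two lemmas; the present lemma is essentially a bookkeeping step that assembles those bounds into a norm inequality and then invokes completeness and the prior density result.
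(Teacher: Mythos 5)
Your argument is correct and matches the paper's: the proof of this lemma in the paper is exactly the two-sided estimate $\|f\| \le \|\Gamma(f)\| \le 3\|f\|$ obtained by combining the bound from Proposition \ref{calk.prop.20} with Lemmas \ref{calk.lem.30} and \ref{calk.lem.40}. The additional material you include (injectivity, closed range via completeness of $\P$, and surjectivity from the density in Corollary \ref{calk.cor.10}) is what the paper defers to the assembly of Theorem \ref{thmc1}, and it is correct as you state it.
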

\begin{proof}
In the proof of Proposition \ref{calk.prop.20} we showed that
\[
\norm{\Gamma(f)} \le 3\norm{f}
\]
for all $f\in \P$. On the other hand, Lemma \ref{calk.lem.30} implies that
\[
\max_{t\in[-1,0]}|f_-(t)| \le \norm{\Gamma(f)}
\]
for all $f\in \P$ and Lemma \ref{calk.lem.40} implies that
\[
\max_{z\in \overline{\d(1,1)}}|f_+(z)| \le \norm{\Gamma(f)}
\]
for all $f\in \P$. Therefore,
\[
\norm{f}=\max\ \big\{\max_{t\in[-1,0]}|f_-(t)|, \max_{z\in \overline{\d(1,1)}}|f_+(z)|\ \big\} \le \norm{\Gamma(f)}
\]
for all $f\in \P$.
\end{proof}
Putting together the results of Subsection \ref{secc2} we get the following theorem. 
\bt
\label{thmc1}
The map $\Gamma$ is a homeomorphic unital isomorphism from $\P$ onto $\calc$. 
\et

\subsection{Some Observations on the Gelfand Theory of $\calc$}
\label{secc3}

If $\L=[-1,0]\cup \overline{\d(1,1)}$, then there is an isometric isomorphism from
$\P$ onto the lollipop algebra $A(\L)$ given by
\[
A(\L) \ni f \mapsto \big(\ f|_{[-1,0]}\ , f|_{(\overline{\d(1,1)})}\ \big).
\]
 So one could just as well state Theorem \ref{thmc1} with $\P$ replaced by $A(\L)$ and $\Gamma$ replaced with the map $\Gamma^\sim:A(\L) \to \calc$ defined by
\[
\
\Gamma^\sim (f)=\Big[\ f \big|_{[-1,0]}(-M_x) + f\big|_{\overline{\d(1,1)}}(H) -f(0)\ \Big].
\]
\begin{defin}
Define $\theta: \A \to A(\L)$ by 
\[
\theta(X) \= (\Gamma^\sim)^{-1} ( [X] ) .
\]
\end{defin}
Then Theorem \ref{thmc1} says that there is a short exact sequence
 \[
0 \to\Kz  \to \A 
\stackrel{\theta}{ \to}
  A(\L)  \to 0 .
\]

\begin{rem}
\label{remc1}
By Mergelyan's theorem, $A(\L) = P(\L)$, and  since $z$ generates $P(\L)$, it follows that
\[
\Gamma^\sim(z)=[H-M_x]
\]
generates $\calc$.
We shall examine $H - M_x$ in Section \ref{secg}.
\end{rem}

%In Section \ref{secd}, we will show
%that $H-M_x$ generates the $C^*$-algebra generated by all of $\A$. 

\section{$C^*(\A)$}
\label{secd}

We shall let $\BB = C^*(\A)$ denote the $C^*$-algebra generated by $\A$. Since it is irreducible, 
$\BB$ contains all the compact operators.

The Toeplitz $C^*$-algebra $\mathcal T$ is the $C^*$ algebra generated by the shift $S$.
There is a  short exact sequence
\[
0 \to\K  \to {\mathcal T}
\stackrel{\alpha}{ \to}
\  A(\overline{\D})  \to 0 .
\]
(See e.g. \cite[7.23]{dou72}). A cross-section of $\alpha
$ is the map that sends a function $m$ to the Toeplitz 
operator $T_m$ on $H^2$ with symbol $m$.

Since $\widehat{H} = S^* + 1$, the $C^*$-algebra generated by $H$ is unitarily equivalent to $\mathcal T$.
% by the \sz transform.
We wish to think of it as living on $\overline{\D(1,1)}$, so we must shift things over.
Let $\tau(z) = z+1$. For any function $f$ defined on some domain in $\C$, let $f^\cup(z) = \overline{f(\bar z)}$ be its reflection in the real axis.

\begin{definition}
Let  $\psi \in C( \partial \D(1,1))$.
Let $H_\psi \in B(L^2)$ be defined by
\[
H_\psi  \=  U^*T^*_{(\psi \circ \tau)^\cup} U .
 \]
\end{definition}
%\blue
%Do you like the notation $H_\psi$ to suggest something akin to a Toeplitz operator on $L^2$?
%Or is it confusing with Hankel operators?
%\black

The map $ \psi \mapsto H_\psi$ is unital and linear, but not multiplicative. 
One checks that if $\psi(z) = z^n$, then $H_{z^n}  = H^n$, and if $\psi(z) = \bar z^n$, then 
$H_{\bar z^n} = (H^*)^n$.

\bt
\label{thmd1}
There is a short exact sequence
\be
\label{eqha0}
0 \to \K \to \BB \stackrel{\pi}{ \to} C(\partial \L) \to 0 .
\ee
For every $X$ in $\BB$, its coset in $\BB/\K$  can be written uniquely as
\be
\label{eqha1}
[X ] \= [g(-M_x) + H_\psi  -g(0) ]
\ee
where $g \in C[-1,0]$,  $\psi \in C( \partial \D(1,1))$, and $g(0) = \psi(0) $.
The essential spectrum of $X$ as in \eqref{eqha1} is $g([0,1]) \cup \psi ( \partial \D(1,1))$.
If $\l \notin \sigma_e(X)$, then the Fredhom index is given by the winding number of $\psi$ about $\l$:
\[
{\rm ind}(X - \l) \= {\rm ind}_{\psi}(\l) .
\]
\et

\bp
For $X \in \BB$, we shall let $[X]$ denote its equivalence class in $\BB/\K$.
We have
\[
[ H M_x ] = [ M_x H] = 0 ,
\]
and
\[
[ HH^* ] = [H^*H] = H + H^* .
\]
So $\BB/\K$ is abelian. Moreover, for any polynomial $q$ in 3 variables, there are polynomials 
$p_1, p_2, p_3$ in one variable so that
\be
\label{eqd1}
[q(M_x, H, H^*) ] \= [p_1(M_x) + p_2 (H) + p_3(H^*)].
\ee
Therefore operators of the form \eqref{eqd1} are dense in $\BB/\K$.

We wish to prove that $\BB/\K$ is isomorphic to the abelian $C^*$-algebra $C(\partial \L)$.
We will use a similar strategy to the proof of Theorem \ref{thmc1}.
Let 
\[
\Q \= \{ f = (f_-, f_+) : f_- \in C([-1,0]), f_+ \in C(\partial \D(1,1)), f_-(0) = f_+(0) \} .
\]
The algebra $\Q$ is just $C(\partial \L)$, but it is easier to define the functional calculus on it.
Define 
\beq
\delta: \Q &\ \to\ & \BB \\
f &\mapsto & f_-(-M_x) + H_{f_+} - f(0) .
\eeq
\ep
Let $\Delta(f) = [ \delta(f) ] $.
The following lemma is straightforward to prove.
\begin{lem}
\label{lemd1}
(i) Let $\psi, \phi \in \cpd$. Then $[ H_\psi H_\phi] = [H_{\psi \phi} ]$.

(ii) Let $ g \in C([-1,0])$ and $\psi \in \cpd$. Then
\[
[g(-M_x) ] [ H_\psi]\=  [ H_\psi] [g(-M_x) ]   \= [g(0) H_\psi + \psi(0) g(-M_x) - g(0) \psi(0)] .
\]
\end{lem}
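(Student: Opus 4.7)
The plan is to handle (i) and (ii) separately, reducing both to standard Toeplitz-algebra compactness together with the elementary observation that $M_x H$, $HM_x$, and their adjoints are compact.

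For (i), transport through the unitary $U$ into Toeplitz operators on $H^2$. Setting $f = (\psi\circ\tau)^\cup$ and $h = (\phi\circ\tau)^\cup$, both composition with $\tau$ and the $\cup$ reflection are multiplicative, so $fh = ((\psi\phi)\circ\tau)^\cup$, and hence
\[
H_\psi H_\phi - H_{\psi\phi} \= U^*\bigl(T^*_f T^*_h - T^*_{fh}\bigr)U.
\]
The classical Toeplitz fact (see e.g.\ \cite{dou72}) says $T_a T_b - T_{ab}$ is compact for $a,b \in C(\partial\D)$; taking adjoints (the scalar symbols commute) gives the same for the $T^*$'s, and unitary conjugation preserves $\K$.

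For (ii), write $g = g(0) + g_0$ and $\psi = \psi(0) + \psi_0$ with $g_0(0) = \psi_0(0) = 0$. Since $H_1 = I$, a direct expansion yields
\[
g(-M_x)H_\psi \= g(0)H_\psi + \psi(0)g(-M_x) - g(0)\psi(0) + g_0(-M_x)H_{\psi_0},
\]
and similarly for $H_\psi g(-M_x)$. Both identities in (ii) thus reduce to showing that $g_0(-M_x)H_{\psi_0}$ and $H_{\psi_0}g_0(-M_x)$ lie in $\K$ whenever $g_0(0) = \psi_0(0) = 0$. Since $g\mapsto g(-M_x)$ and $\psi\mapsto H_\psi$ are bounded linear and $\K$ is norm-closed, Weierstrass on $[-1,0]$ and Stone-Weierstrass on $\partial\D(1,1)$ let one further assume that $g_0$ is a polynomial and that $\psi_0(z) = \sum a_{ij}z^i\bar z^j$ is a polynomial in $z,\bar z$, both vanishing at $0$ (so $a_{00}=0$).

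Now factor $g_0(-M_x) = M_x s(M_x)$ for some polynomial $s$. Applying Part (i) iteratively together with $H_{z^n} = H^n$ and $H_{\bar z^n} = (H^*)^n$ gives $H_{\psi_0}\equiv P(H,H^*)\pmod{\K}$ where $P(u,v) = \sum a_{ij}u^iv^j$, so $P(0,0)=0$. Every monomial of $P$ begins and ends with a factor of $H$ or $H^*$, and the compactness of
\[
M_x H \= V, \qquad HM_x \= (1-H)V, \qquad M_x H^* \= (HM_x)^*, \qquad H^*M_x \= (M_x H)^*
\]
implies that each such monomial is compact after left- or right-multiplication by $M_x$. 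Hence $M_x H_{\psi_0}$ and $H_{\psi_0}M_x$ are compact, and multiplying by the bounded factor $s(M_x)$ finishes the argument. The main bookkeeping obstacle is this last step---reducing $H_{\psi_0}$ modulo compacts to a noncommutative polynomial in $H,H^*$ without constant term---where Part (i) together with the identifications $H_{z^n}=H^n$, $H_{\bar z^n}=(H^*)^n$ are essential.
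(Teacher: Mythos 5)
Your proof is correct, and since the paper states Lemma \ref{lemd1} without proof (calling it straightforward), your argument supplies exactly the intended ingredients: for (i), unitary transport via $U$ to adjoints of Toeplitz operators and the compactness of semicommutators $T_aT_b-T_{ab}$ with continuous symbols; for (ii), the reduction to the error term $g_0(-M_x)H_{\psi_0}$ with $g_0(0)=\psi_0(0)=0$, polynomial approximation, and the compactness of $M_xH=V$, $HM_x=(1-H)V$ and their adjoints. No gaps; this is the natural argument the paper has in mind.
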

Using Lemma \ref{lemd1}, one can check that $\Delta$ is a unital *-homomorphism from
$\Q$ into $\BB/\K$. Its range is dense, so if we can show it has no kernel, then it is a C*-isomorphism.

\begin{lem}\label{lemd2}
If $s\in [-1,0]$, then
\be\label{eqd3}
|f_-(s)| \le \norm{\Delta(f)}
\ee
for all $f\in \Q$.
\end{lem}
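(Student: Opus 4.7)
The plan is to follow the template of Lemma \ref{calk.lem.30} almost verbatim; the only genuinely new ingredient is that the functional calculus $\psi \mapsto H_\psi$ now involves both $H$ and $H^*$ rather than just $H$. By continuity of $f_-$ I reduce at once to $s \in (-1,0)$. Let $\Q_0 \subset \Q$ denote the pairs $(f_-,f_+)$ with $f_-$ a polynomial on $[-1,0]$ and $f_+$ a polynomial in $z$ and $\bar z$; by Stone--Weierstrass on $\partial \D(1,1)$, $\Q_0$ is dense in $\Q$, so since $\Delta$ is continuous it suffices to verify \eqref{eqd3} for $f \in \Q_0$.

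I then reuse the unit vectors $\chi_n$ from the proof of Lemma \ref{calk.lem.30}, supported in a shrinking neighborhood of $-s \in (0,1)$. Those vectors already give: (i) $\chi_n \to 0$ weakly, so $\norm{K\chi_n} \to 0$ for every compact $K$; (ii) $\ip{f_-(-M_x)\chi_n}{\chi_n} \to f_-(s)$; (iii) $\norm{H\chi_n} \to 0$. The single new estimate I need is
\[
\norm{H^*\chi_n} \to 0,
\]
which I would establish directly from the formula $(H^*g)(t) = \int_t^1 g(x)/x\,dx$ (obtained by taking adjoints of $H = M_{1/x} V$ via Fubini): $(H^*\chi_n)(t)$ vanishes for $t \geq -s+1/n$ and equals $\sqrt{n/2}\,\log\!\big((-s+1/n)/(-s-1/n)\big) = O(1/\sqrt n)$ for $t \leq -s-1/n$, so $\norm{H^*\chi_n}^2 = O(1/n)$.

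Given these four ingredients, I finish as in Lemma \ref{calk.lem.30}. For $f \in \Q_0$ with $f_+ = \sum c_{jk}\, z^j \bar z^k$, iterating Lemma \ref{lemd1}(i) yields $[H_{f_+}] = [\sum c_{jk}\, H^j (H^*)^k]$ in $\BB/\K$, so for any compact $K$ I can rewrite $\delta(f) + K = f_-(-M_x) + \sum c_{jk}\, H^j(H^*)^k - f(0) + K'$ for some compact $K'$. Applying this against $\chi_n$: the constant $c_{00} = f_+(0) = f(0)$ cancels $-f(0)$; each monomial with $(j,k)\neq(0,0)$ sends $\chi_n$ to a vector of norm $o(1)$ (if $k \geq 1$, pull a single $H^*$ out to the right and use the new estimate; if $k=0$ use (iii)); the $K'$ contribution vanishes by (i). Combining with (ii), $\ip{(\delta(f)+K)\chi_n}{\chi_n} \to f_-(s)$. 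Since $\norm{\chi_n} = 1$ this gives $|f_-(s)| \leq \norm{\delta(f)+K}$, and taking the infimum over $K \in \K$ establishes \eqref{eqd3} on $\Q_0$; density finishes the argument.

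The main obstacle, relative to Lemma \ref{calk.lem.30}, is precisely the new estimate $\norm{H^*\chi_n} \to 0$: this is where the richer symbol class forces control of $H^*$ and not just $H$. Fortunately it is still a routine integral estimate, and the rest of the proof is a mechanical repetition of the earlier argument.
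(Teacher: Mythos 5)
Your proof is correct, and structurally it follows the paper's argument: the same test vectors $\chi_n$ concentrated near $-s$, the same reduction to a dense subclass using continuity of $\Delta$, and the same passage to $\inf_{K}\norm{\delta(f)+K}=\norm{\Delta(f)}$. The one genuine divergence is the treatment of the co-analytic part of $f_+$, and it is worth knowing that the estimate you flag as the main obstacle is not needed in the paper's version. There, one takes $f_+(z)=f(0)+zp_2(z)+\overline{zp_3(z)}$, so that $H_{f_+}=f(0)+p_2(H)H+\bigl(p_3(H)H\bigr)^*$ exactly (no passage modulo compacts is required at this step), and the adjoint term is handled by moving it across the inner product: $\ip{(p_3(H)H)^*\chi_n}{\chi_n}=\ip{\chi_n}{p_3(H)H\chi_n}\to 0$, using only the already established fact $\norm{H\chi_n}\to 0$. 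You instead keep mixed monomials $z^j\bar z^k$, invoke Lemma \ref{lemd1}(i) to replace $H_{f_+}$ by $\sum c_{jk}H^j(H^*)^k$ up to a compact error (legitimate, since that error is absorbed into the compact perturbation you are already infimizing over), and then prove directly that $\norm{H^*\chi_n}=O(1/\sqrt n)$ from $(H^*g)(t)=\int_t^1 g(x)/x\,dx$; that computation is correct, as is the cancellation of $c_{00}=f_+(0)=f(0)$ against $-f(0)$. So your route costs one extra (easy) integral estimate and an appeal to Lemma \ref{lemd1}(i), while the paper's factorization of the symbol into analytic and co-analytic pieces avoids both; each yields a complete proof. (Incidentally, placing $\chi_n$ near $-s$ quietly corrects the sign slip in the paper's condition $1/n<\min\{s,1-s\}$.)
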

\begin{proof}
As $f$ is continuous, it suffices to prove the lemma under the assumption that $s\in (-1,0)$,
and as $\Delta$ is continuous, 
we can assume that $f_-$ is a polynomial, and that $f_+(z) = f(0) + z p_2(z) + \overline{ zp_3(z)}$
where $p_2$ and $p_3$ are polynomials.

As in Lemma \ref{calk.lem.30}, 
for $n$ satisfying $1/n < \min \{s, 1-s\}$ we define a unit vector $\chi_n \in \ltwo$ by the formula
\[
\chi_n(t)=
\twopartdef{\sqrt{\frac{n}{2}}}{|t-s|\le 1/n}{0}{|t-s|>1/n}
\]
Let $K$ be compact.

\beq
\ip{(\delta(f)+K)\ \chi_n}{\chi_n}
&=\ &\ip{(f_-(-M_x) + H_{z p_2} + H_{\bar z \bar p_3} +K)\ \chi_n}{\chi_n}\\
&=\ &\ip{f_-(-x)\chi_n}{\chi_n}+\ip{p_2(H) H\ \chi_n}{\chi_n}+\ip{\chi_n}{p_3(H) H\ \chi_n}+  \ip{K\ \chi_n}{\chi_n}\\
&\to\ & \ \ \  f_-(s)\qquad\quad\  +\qquad\qquad  0\qquad\ \ \    +\qquad\  0 \qquad\ \ \ \ \ \  \ + \qquad 0\\
&=\ &\ \ f_-(s).
\eeq
Therefore, 
\[
|f_-(s)|\le \inf_{K\in\K}\norm{\delta(f)+K}=\norm{\Delta(f)}.
\]
\end{proof}

If $\Delta(f) = 0$, by Lemma \ref{lemd2} we must have $f_- = 0$. 
So $\delta(f) = \hfp$ must be compact. But $\hfp$ is unitarily equivalent to a Toeplitz operator, and there are
no non-zero compact Toeplitz operators.
Therefore $\Delta$ has a trivial kernel, and hence is a $*$-isomorphism.

The claim about the spectrum of $[x]$ now follows from the fact that the spectrum of a function in $C(\partial \L)$
equals its range. Finally, the claim about the Fredholm
index follows from the fact that the Fredholm index at $\lambda$ will be unchanged under any homotopy
of $f$  that keeps $\lambda$ outside its range. Then $f$ can be homotoped to $(f_-, f_+)$ where
$f_+(z) = (z-1-\lambda)^n$ and $f_-(x) = (-1-\lambda)^n$ for some integer $n$, and the Fredholm index of $\delta(f) $ is $n$.
\ep

%Although the algebra $\A$ is not singly generated, it follows from Theorem \ref{thmd1} that the $C^*$-algebra $\BB$ is.
%\begin{corollary}
%\label{cord1}
%The $C^*$-algebra $\BB$ is generated by the operator $Z = H - M_x$.
%\end{corollary}
%\bp
%The map $\pi$ in \eqref{eqha0} has
%\[
%\pi ( g(-M_x) + H_\psi - g(0) ) \= (g, \psi) ,
%\]
%where $(g,\psi)$ is thought of as a function on $\partial \Lambda$.
%In particular,
%\[
%\pi( - M_x + H) \= z .
%\]
%Let $\BB_0$ be 
% the $C^*$-algebra generated by $Z$.
% The self-commutator 
% \[
% Z Z^* - Z^* Z \= (HH^* - H^*H) + H^*M_x - M_x H^* + M_x H - H M_x
% \]
% is compact. It is straightforward to check that it is   non-zero, for example:
% \[ \la ( Z Z^* - Z^* Z) 1, 1 \ra \= \frac{3}{2} . 
% \]
% 
%Claim: $\BB_0$ is irreducible.
%
%{\sc Proof of claim:} Let $P$ be a projection that commutes with every element of $\BB_0$.

\section{Compact operators in the little algebra $\Az$}
\label{secl}

Recall from Definition \ref{defina3} that $\Az$ is the norm-closed algebra generated by $M_x$ and $V$.
%It is a proper sub-algebra of $\A$, as can be seen by noting that its quotient by the compacts is isomorphic to $C([0,1])$.
We shall prove that every compact operator in ${\rm AlgLat}(V)$ lies not just in $\A$ but in $\A_0$.

For $I$ an interval in $[0,1]$, let  us write $L^2(I)$ for the subspace of $L^2$ that vanishes a.e.
off $I$, and let 
$P_I$ denote projection onto $L^2(I)$. For $\phi, \psi \in L^2$ we write $\phi \otimes \psi$ to denote the rank one operator
\[
\phi \otimes \psi : f \ \mapsto \  \la f, \psi\ra \phi .
\]

The key observation is the following:
\begin{lemma}
\label{leml0}
Suppose $\phi \in L^2[t,1]$ and $\psi \in L^2[0,t]$. Then $\phi \otimes \psi = M_\phi V M_\psi^*$.
\end{lemma}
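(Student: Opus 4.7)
The plan is to verify the identity by a direct pointwise computation applied to an arbitrary $f \in L^2$. I would unwind the three operators one at a time: $M_\psi^\ast f = \bar\psi f$, then $V M_\psi^\ast f(x) = \int_0^x \bar\psi(s) f(s)\, ds$, and finally
\[
M_\phi V M_\psi^\ast f(x) \= \phi(x) \int_0^x \bar\psi(s) f(s)\, ds.
\]
The remainder is a case split on $x$ using the support hypotheses.

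For $x < t$, the assumption $\phi \in L^2[t,1]$ gives $\phi(x) = 0$, so both $(M_\phi V M_\psi^\ast f)(x)$ and $(\phi \otimes \psi) f(x) = \phi(x) \langle f,\psi \rangle$ vanish. For $x \ge t$, the assumption $\psi \in L^2[0,t]$ forces $\bar\psi(s) f(s) = 0$ for $s > t$, so
\[
\int_0^x \bar\psi(s) f(s)\, ds \= \int_0^t \bar\psi(s) f(s)\, ds \= \langle f, \psi\rangle,
\]
giving $M_\phi V M_\psi^\ast f(x) = \phi(x) \langle f, \psi\rangle = (\phi \otimes \psi) f(x)$, which completes the identification.

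The only point that needs a brief sanity check is that $M_\phi V M_\psi^\ast$ makes sense on all of $L^2$ even though $\phi$ and $\psi$ need not be essentially bounded. By Cauchy--Schwarz, $\bar\psi f \in L^1$ for every $f \in L^2$, and $|V(\bar\psi f)(x)| \le \|\psi\|_2 \|f\|_2$, so $V(\bar\psi f) \in L^\infty$; multiplying by $\phi \in L^2$ then yields an $L^2$ function. There is no real obstacle here — once the supports of $\phi$ and $\psi$ are brought in, the lemma is immediate.
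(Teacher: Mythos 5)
Your proof is correct and is essentially the paper's own argument: compute $M_\phi V M_\psi^* f(x) = \phi(x)\int_0^x f(s)\overline{\psi(s)}\,ds$ and split into $x<t$ (where $\phi$ vanishes) and $x\ge t$ (where the integral saturates to $\langle f,\psi\rangle$). The added remark that the composition is bounded even for unbounded symbols is a reasonable extra observation but not needed beyond what the paper states.
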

\bp
We have
\[ M_\phi V M_\psi^* f (x) \= \phi(x) \int_0^x f(s) \overline{\psi (s)} ds .
\]
The right-hand side is $0$ if $x < t$, and $\phi(x) \la f , \psi \ra$ if $x > t$.
\ep

\begin{lemma}
\label{leml3}
Every finite rank operator on $L^2[0,1]$ can be written as an integral operator whose kernel is in $L^2([0,1] \times [0,1])$.
\end{lemma}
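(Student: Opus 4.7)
The plan is entirely routine: decompose a finite rank operator into rank-one summands, write each as an integral operator, and add the kernels.

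First, given a finite rank operator $T$ on $L^2[0,1]$, I would use the standard fact that $T$ admits a representation
\[
T \= \sum_{j=1}^n \phi_j \otimes \psi_j
\]
for some $\phi_j, \psi_j \in L^2[0,1]$ (for instance, take $\{\psi_j\}$ an orthonormal basis of $\ker(T)^\perp$ and set $\phi_j = T\psi_j$).

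Next, I would compute the action of each rank-one summand explicitly:
\[
(\phi_j \otimes \psi_j) f(x) \= \phi_j(x) \int_0^1 f(s) \overline{\psi_j(s)} \, ds ,
\]
so $\phi_j \otimes \psi_j$ is the integral operator with kernel $k_j(x,s) = \phi_j(x) \overline{\psi_j(s)}$. Fubini gives
\[
\|k_j\|_{L^2([0,1]\times[0,1])}^2 \= \|\phi_j\|_2^2 \, \|\psi_j\|_2^2 \ < \ \infty .
\]

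Finally, summing over $j$, the operator $T$ is the integral operator with kernel $K(x,s) = \sum_{j=1}^n \phi_j(x) \overline{\psi_j(s)}$, which lies in $L^2([0,1]\times[0,1])$ as a finite sum of $L^2$ functions (Minkowski gives $\|K\|_{L^2} \le \sum_j \|\phi_j\|_2 \|\psi_j\|_2$).

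There is no substantive obstacle here; the lemma is essentially a restatement of the elementary fact that rank-one operators on $L^2$ are Hilbert--Schmidt. It is included as a stepping stone, presumably to be combined with Lemma \ref{leml0} so that finite rank operators supported appropriately can be realized inside $\Az$, en route to proving Theorem \ref{thma3}.
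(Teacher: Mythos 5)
Your proof is correct and follows essentially the same route as the paper: write the finite rank operator as a sum of rank-one operators $\phi_j \otimes \psi_j$ and take the kernel $k(x,s)=\sum_j \phi_j(x)\overline{\psi_j(s)}$, which lies in $L^2([0,1]\times[0,1])$. The extra details you supply (the decomposition via an orthonormal basis of $\ker(T)^\perp$ and the norm estimates) are fine but not needed beyond what the paper records.
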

\bp
Let $K = \sum_{j=1}^n \phi_j \otimes \psi_j$. Define 
\[
k(x,s) \= \sum_{j=1}^n \phi_j(x) \overline{\psi_j(s)} .
\]
Then
$K f (x) = \int_0^1 k(x,s) f(s) ds$, and $k$ is in $L^2([0,1] \times [0,1])$.
\ep

\begin{lemma}
\label{leml1}
Let $k$ be in  $L^2([0,1] \times [0,1])$, and let
$T f (x) = \int_0^1 k(x,s) f(s) ds$. Then $T$ is in ${\rm AlgLat}(V)$ if and only if
$k(s,x) = 0$ for $s > x$.
\end{lemma}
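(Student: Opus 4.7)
The plan is to use the concrete description of ${\rm Lat}(V)$ given in the introduction, namely that the invariant subspaces of $V$ are exactly the subspaces $L^2([s_0,1])$ for $s_0\in[0,1]$. This recasts membership in ${\rm AlgLat}(V)$ as the operator identity
\[
P_{[0,s_0]}\,T\,P_{[s_0,1]} \= 0 \qquad \forall\, s_0\in[0,1],
\]
which is the form I would manipulate at the level of integral kernels.

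For the sufficiency direction (assuming the triangular vanishing condition on $k$), I would argue pointwise. If $f$ vanishes on $[0,s_0]$, then for $x\in[0,s_0]$ the integrand $k(x,s)f(s)$ is zero for $s>x$ by hypothesis and zero for $s\le x\le s_0$ because $f(s)=0$ there. Hence $Tf$ vanishes on $[0,s_0]$, so $T$ preserves each element of ${\rm Lat}(V)$.

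For the necessity direction, I would compute the integral kernel of $P_{[0,s_0]}\,T\,P_{[s_0,1]}$, which is simply
\[
\mathbf{1}_{[0,s_0]}(x)\,k(x,s)\,\mathbf{1}_{[s_0,1]}(s).
\]
Because the map sending an $L^2$ kernel to its integral operator is an isometric embedding into the Hilbert--Schmidt class (and in particular injective), the operator being zero forces this kernel to vanish a.e. Thus, for each fixed $s_0$, $k(x,s)=0$ a.e. on the rectangle $\{(x,s): x<s_0<s\}$. The final step is to upgrade "for each $s_0$" to "on the full triangle": taking the union over $s_0$ in a countable dense set (say the rationals in $[0,1]$) exhausts $\{(x,s): x<s\}$ up to a null set, yielding the desired vanishing.

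The only technical subtlety I anticipate is the passage from individual rectangles to the full triangle, since each $s_0$ only controls $k$ off a null set that could depend on $s_0$. This is handled cleanly by restricting $s_0$ to a countable dense set and taking a countable union of null sets; no real obstacle arises. Everything else is a direct translation between the subspace condition and the kernel.
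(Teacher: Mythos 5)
Your proof is correct and follows essentially the same route as the paper: sufficiency by the triangularity of the kernel, and necessity by compressing to the rectangles $[0,s_0]\times[s_0,1]$ and using that an $L^2$ kernel induces the zero integral operator only if it vanishes a.e. The only difference is cosmetic --- the paper argues the necessity in contrapositive form and leaves the passage from individual rectangles to the full triangle implicit, whereas you make the countable-union-over-rational-$s_0$ step explicit, which is a welcome clarification rather than a new idea.
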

\bp
Sufficiency is clear. To prove necessity, assume that for
 some $ 0 < t < 1$, the kernel
\[
k(s,x) \chi_{[0,t](x)} \chi_{[t,1]}(s) 
\]
is not $0$ a.e. As an integral operator is zero if and only if  the kernel is $0$ a.e.,
this means that the corresponding integral operator is non-zero, and hence $T$ maps a 
function in $L^2 (t,1)$ to a function that is not $0$ a.e. on $[0,t]$.
\ep

\begin{lemma}
\label{leml2}
 Let $T = \phi \otimes \psi$ be a rank-one operator. Then $T$ is in ${\rm AlgLat}(V)$
if and only if for some $ 0 < t < 1$, the support of $\phi$ is in $[t,1]$ (i.e. $\phi = 0 $ a.e. on $[0,t]$) and
the support of $\psi$ is in $[0,t]$. In this case, $T \in \A_0$.
\end{lemma}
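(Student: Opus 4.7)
The lemma has two components: the characterization of which rank-one operators $T = \phi \otimes \psi$ lie in ${\rm AlgLat}(V)$, and the claim that when they do, they belong to $\A_0$. I handle them separately.

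For the characterization, the ``if'' direction is an immediate check: with $\phi$ supported in $[t,1]$ and $\psi$ in $[0,t]$, for any $f \in L^2[s,1]$ either $s \leq t$, in which case $\phi \in L^2[t,1] \subseteq L^2[s,1]$ so $Tf = \langle f,\psi\rangle \phi \in L^2[s,1]$, or $s > t$, in which case $\psi$ vanishes on $[s,1]$ so $\langle f,\psi\rangle = 0$ and $Tf = 0$. For the ``only if'' direction, assuming $T \neq 0$ (else take any $t$), invariance of $L^2[s,1]$ for each $s \in (0,1)$ forces the dichotomy: either $\psi = 0$ a.e.\ on $[s,1]$, or $\phi \in L^2[s,1]$ so that the possibly nonzero image $\langle f,\psi\rangle \phi$ lands in $L^2[s,1]$. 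Writing the two sets of admissible $s$ as $A = \{s : \phi = 0 \text{ a.e.\ on } [0,s]\} = [0,t_\phi]$ and $B = \{s : \psi = 0 \text{ a.e.\ on } [s,1]\} = [t_\psi,1]$, the requirement $A \cup B \supseteq (0,1)$ reads $t_\psi \leq t_\phi$; nonvanishing of $\phi$ and $\psi$ gives $t_\phi < 1$ and $t_\psi > 0$, so any $t \in [t_\psi, t_\phi] \subseteq (0,1)$ satisfies the support requirements.

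For the membership in $\A_0$, I will use Lemma~\ref{leml0} and polynomial approximation in $L^2$. Pick polynomials $p_n \to \phi$ and $q_n \to \bar\psi$ in $L^2[0,1]$. Then $M_{p_n}$ and $M_{q_n}$ are polynomials in $M_x$, hence lie in $\A_0$, and so does each $M_{p_n} V M_{q_n}$. A direct Cauchy--Schwarz estimate of the form $\|M_u V M_v f\|_2 \leq \|u\|_2 \|v\|_2 \|f\|_2$ applied to the telescoped difference yields
\[
\| M_{p_n} V M_{q_n} - M_\phi V M_{\bar\psi} \|_{\rm op} \;\leq\; \|p_n - \phi\|_2 \|q_n\|_2 + \|\phi\|_2 \|q_n - \bar\psi\|_2 \;\to\; 0,
\]
and by Lemma~\ref{leml0} the limit $M_\phi V M_{\bar\psi}$ equals $\phi \otimes \psi$. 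Hence $\phi \otimes \psi \in \A_0$.

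The main subtlety lies in this last step: since $\phi$ and $\psi$ are only in $L^2$ and need not be bounded, the multiplication operators $M_\phi$ and $M_{\bar\psi}$ are generally unbounded and cannot be approximated in operator norm by $M_{p_n}$ and $M_{q_n}$ in isolation. What rescues the argument is that the sandwich $M_u V M_v$ is always bounded with operator norm at most $\|u\|_2 \|v\|_2$, so $L^2$-approximation of the symbols is enough for operator-norm convergence of the full product. Note that the intermediate operators $M_{p_n} V M_{q_n}$ are generally not themselves rank-one (Lemma~\ref{leml0}'s identification requires the disjoint-support hypothesis), but norm-convergence to the rank-one limit is all that is needed to conclude $\phi \otimes \psi \in \A_0$.
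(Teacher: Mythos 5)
Your proof is correct and follows essentially the same route as the paper: for membership in $\A_0$ the paper likewise writes $\phi\otimes\psi = M_\phi V M_\psi^*$ via Lemma \ref{leml0} and approximates the symbols (by continuous functions rather than polynomials) in $L^2$, relying on exactly the sandwich estimate $\|M_u V M_v\| \le \|u\|_2\|v\|_2$ that you make explicit. The only cosmetic difference is in the support characterization, where the paper cites Lemma \ref{leml1} while you argue the dichotomy directly from invariance of the spaces $L^2[s,1]$; both amount to the same observation.
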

\bp
The first part follows from Lemma \ref{leml1}. For the second part, observe that
if the supports of $\phi, \psi$ are in $[t,1]$ and $[0,t]$ respectively, then 
$\phi \otimes \psi = M_\phi V M_\psi^*$. If $\phi$ and $\psi$ are both in $C([0,1])$, this proves
that $\phi \otimes \psi \in \A_0$. 

For the general case, choose continuous functions  $f_n$ and $g_n$ that converge to $\phi$ and $\psi$ respectively in $L^2$.
It  follows from Lemma \ref{leml0} that $M_{f_n} V M_{g_m}^*$ converges to $M_\phi V M_{g_m}^*$ in norm as $n \to \infty$, and that $M_\phi V M_{g_m}^*$ converges to $ M_\phi V M_\psi^*$. Therefore  $ \phi \otimes \psi \in
\A_0$.
\ep

\bt
\label{thml1}
Let $K$ be a compact operator in ${\rm AlgLat}(V)$. Then $K \in \A_0$,
and can be approximated in norm by finite rank operators in $\A_0$.
\et
\bp
Note that $K \in {\rm AlgLat}(V)$ means that for all $ 0 < s < 1$, we have
$P_{[0,s]} K P_{[s,1]} = 0$.

Let $\vare > 0$. First, consider $P_{[1/2,1]} K P_{[0,1/2]}$. This can be approximated within $\vare/2$ by a finite rank 
operator that is a sum of rank one operators that map $L^2(0,1/2)$ to $L^2 (1/2,1)$.
By Lemma \ref{leml2}, this means that this finite rank operator is in $\A_0$.

A similar argument shows that $P_{[1/4, 1/2]} K P_{[0,1/4]}$ and $P_{[3/4,1]} K P_{[1/2,3/4]}$
can both be approximated by finite rank operators in $\A_0$ within $\vare/8$.
Iterating, we get that if $n$ is a power of $2$, we can approximate
\[
K - \sum_{j=1}^n P_{[(j-1)/n, j/n]} K P_{[(j-1)/n, j/n]}
\]
within $\vare$ by a finite rank operator in $\A_0$.

Finally we observe that
\be
\label{eql1}
\| \sum_{j=1}^n P_{[(j-1)/n, j/n]} K P_{[(j-1)/n, j/n]} \| 
\= \max_{1 \leq j \leq n} \| P_{[(j-1)/n, j/n]} K P_{[(j-1)/n, j/n]} \|.
\ee
Since $K$ is compact, 
\[
\lim_{ n \to \i} \sup_{|I| = 1/n} \| K P_I \| = 0 ,
\]
so \eqref{eql1} tends to $0$.
\ep

\section{The operator $H-M_x $}
\label{secg}

Let us write $Z$ for the operator $H-M_x$.
We know that $[Z]$ generates the Calkin Hardy-Weyl algebra $\calc$.
By Theorem \ref{thmc1} we know that the spectrum of $[Z]$ in $\A /\K_0$ is $\L$.
It is not surprising that $\D(1,1)$ are eigenvalues of $Z$, since they are eigenvalues of $H$.
It is perhaps surprising that every point in the stick, except $-1$, is also an eigenvalue.
Moreover as we move up the stick to the bulb of the lollipop, the eigenvalues increase in multiplicity.
%\blue
%Are you okay with this last sentence?
%\black
\bt
\label{thmg1}
(i) $\sigma_p (Z) = \D(1,1) \cup (-1,0]$.

(ii) The point spectrum of $Z^*$ is empty.

(iii) The spectrum of $Z$ is $\L$.
\et

\bp 
 (i) Suppose $(Z - \lambda ) f = 0$.
Let $F(x) = Vf(x) = \int_0^x f(t) dt$.
Then we have
\[
\frac{1}{x}F(x) \= (x+\l) f(x) .
\]
As $F'(x) = f(x)$, we get the equation
\be
\label{eqg1}
\frac{1}{x}F(x) = (x+\l) F'(x),
\ee
with the boundary condition 
\be
\label{eqg15}
 \quad F(0) \= 0.
 \ee
The function $F$ is continuous. Let $\Omega$ denote the relatively open subset of $[0,1]$ on which it is non-zero.

We get that the solution of 
\eqref{eqg1}, with $\l \neq 0$, is
\be
\label{eqg14}
F(x) \= c \left( \frac{x}{x+\l} \right)^{1/\l}  \chi_\Omega(x)\ ,\ee
where the constant $c$ can a priori be different on different components of $\Omega$ (though we show
below that $\Omega$ is actually connected). Hence
\be
\label{eqg2}
f(x) \= c x^{\frac{1}{\l} - 1} (x+\l)^{-1 - \frac{1}{\l}}  \chi_{\Omega}(x).
\ee
Case: $\Omega = [0,1]$.

For $f$ to be in $L^2$ with $c \neq 0$, we need 
\[
\Re (\frac{1}{\l} - 1) \ > \ - \frac{1}{2} ,
\]
which is the same as $\l \in \D(1,1)$.
For $\l \in \D(1,1)$, we get the eigenfunctions
\be
\label
{eqg16}
f(x) \=  x^{\frac{1}{\l} - 1} (x+\l)^{-1 - \frac{1}{\l}} .
\ee
When $\l = 0$, we get
\[ F(x) \= c e^{-1/x}, \]
and
\be
\label{eqg17}
f(x) \= c\frac{1}{x^2} e^{-\frac{1}{x}} .
\ee

Now suppose that $\Omega$ is not all of $[0,1]$. Decompose $\Omega$ as a union of disjoint non-empty intervals. On each interval, we have that $f$ is given by \eqref{eqg2}, with some 
 constant $c$ that  can depend on the interval.
Choose such an interval, $I$. Then $0$ cannot be an end-point of $I$, or we would have that $F$ is given by \eqref{eqg14}, and this is discontinuous at the right-hand end-point of $I$.

So assume that the left-hand end-point of $I$ is $t > 0$. Then the boundary condition \eqref{eqg15} is replaced by $F(t) = 0$.
On $I$, we have
\[
F(x) \= c \left( \frac{x}{x+\l} \right)^{1/\l}  ,
\]
so to have $F(t) = 0$ we need $\l = - t$. By continuity, $F$ cannot vanish again, so we conclude that $I = (t,1]$ and that for
$\l \in (-1,0)$ 
the function
\be
\label{eqg3}
f(x) \= x^{\frac{1}{\l} - 1} (x+\l)^{-1 - \frac{1}{\l}} \chi_{[-\lambda,1]}(x)
\ee
is an eigenvector of $Z$ with eigenvalue $\l$.

(ii) As $H^* f(x) = \int_x^1 \frac{1}{t} f(t) dt$, the eigenvalue equation becomes
\[
G(x) \= (x+\l) f(x) , 
\]
where
\[
G(x) \=  \int_x^1 \frac{1}{t} f(t) dt \]
As $G'(x) = - \frac{1}{x} f(x)$, we get the differential equation
\be
\label{eqg4}
G(x) \= - x (x + \l)  G'(x) , \quad G(1) = 0 .
\ee
Solving for $G$, we get
\[
G(x) \= c x^{-\frac{1}{\l}} (x+\l)^{\frac{1}{\l} } \, \chi_{\{ G \neq 0 \}} .
\]
The only solution on an interval that vanishes on the right end-point is the zero solution.

(iii) We know \[
\sigma_e( Z) = \partial \L  \subseteq \L = \overline{\sigma_p(Z)}  \subseteq \sigma(Z) .
\]
If $\l$ is a point in $ \C \setminus \L$, it must be a Fredholm point.
By (i), it is not an eigenvalue of $Z$, and by (ii) it is not an eigenvalue of $Z^*$.
Therefore $Z - \l$ has trivial kernel and cokernel, and closed range. Therefore it is invertible,
and $\l$ is in the resolvent of $Z$.
\ep

Not only are points on the stick of the lollipop eigenvalues, there is some additional smoothness.
 By a generalized eigenvector of order $n$ we
mean a vector $f$ that satisfies $(Z-\lambda)^{n+1} f = 0$ but $(Z-\lambda)^n f \neq 0$.

%they are generalized eigenvalues of all orders.
We shall prove the case $\lambda = 0$ first.

\begin{lem}
\label{lemg1}
At $0$, the operator $Z$ has generalized eigenvectors of all orders.
\end{lem}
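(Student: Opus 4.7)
The plan is to construct, inductively for each $n \geq 0$, a function $f_n \in L^2[0,1]$ satisfying $Z f_n = f_{n-1}$ (with the convention $f_{-1} := 0$), starting from the eigenvector $f_0(x) = x^{-2} e^{-1/x}$ appearing in \eqref{eqg17}. A trivial induction then gives $Z^k f_n = f_{n-k}$ for $0 \leq k \leq n$, so $Z^{n+1} f_n = 0$ while $Z^n f_n = f_0 \neq 0$, exhibiting $f_n$ as a generalized eigenvector of order $n$.

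For the ansatz, I would pass to the antiderivative side: write $F_n := V f_n$ and substitute $y = 1/x$. Since $F_0 = e^{-y}$, a natural guess is $F_n(x) = p_n(y)\, e^{-y}$ for a polynomial $p_n$. Using $dy/dx = -y^2$, a short direct computation shows that for any polynomial $P$ and $F(x) = P(y) e^{-y}$ with $f = F'$,
\[
f(x) \= y^2 \bigl(P(y) - P'(y)\bigr) e^{-y}, \quad H f(x) \= \frac{F(x)}{x} \= y\, P(y)\, e^{-y}, \quad x f(x) \= y \bigl(P(y) - P'(y)\bigr) e^{-y},
\]
so that $Z f(x) = y\, P'(y)\, e^{-y}$. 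Imposing $Z f_n = f_{n-1}$ under this ansatz collapses to the scalar recursion
\[
p_n'(y) \= y \bigl( p_{n-1}(y) - p_{n-1}'(y) \bigr), \qquad p_0 \equiv 1.
\]
Since the right-hand side is a polynomial of degree $\deg p_{n-1} + 1$, antidifferentiation produces $p_n$ as a polynomial of degree $2n$; the constants of integration are immaterial.

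Two routine points then need checking. First, the identification $H f_n = F_n/x$ requires $V f_n = F_n$, which follows from the fundamental theorem of calculus because $F_n$ is smooth on $(0,1]$, satisfies $F_n(0^+) = 0$ (since $y^{2n} e^{-y} \to 0$), and has $f_n = F_n'$ bounded on $(0,1]$. Second, $f_n \in L^2[0,1]$ follows from the bound $|f_n(x)| \leq C_n\, y^{2n+2}\, e^{-y}$ together with the fact that $y^k e^{-y}$ is bounded on $[1,\infty)$ for every $k$, which gives $f_n \in L^\infty[0,1] \subset L^2[0,1]$. The one step that is not pure bookkeeping is verifying the algebraic identity $Z: P(y) e^{-y} \mapsto y P'(y) e^{-y}$ at the level of antiderivatives; once that is in hand the induction closes mechanically.
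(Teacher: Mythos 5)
Your proposal is correct and follows essentially the same route as the paper: both arguments construct the chain $Zf_n=f_{n-1}$ inductively from the eigenvector $f_0(x)=x^{-2}e^{-1/x}$ using the ansatz (polynomial in $1/x$) times $e^{-1/x}$, and both solve the resulting first-order ODE for the antiderivative $F_n=Vf_n$ with the integrating factor $e^{1/x}$; your substitution $y=1/x$ and the recursion $p_n'(y)=y\bigl(p_{n-1}(y)-p_{n-1}'(y)\bigr)$ is just a clean repackaging of that computation, and your verifications that $F_n=Vf_n$ and $f_n\in L^\infty\subset L^2$ cover the points the paper treats implicitly.
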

\bp
We want to show that if we let $f_0(z) = \frac{1}{x^2} e^{-\frac{1}{x}}$ from
\eqref{eqg17}, then for every $n \in \NN$ there exists $f \in L^2$ so that
\be
\label{eqg21}
Z f_{n+1} \= f_n .
\ee
\begin{claim}
\label{clg2}
For every $n \in \NN$ there exists a polynomial $p_n$ of degree $2n+2$, with lowest order term
of degree $n+2$, so that the functions 
\[
f_n(x) \= p_n(\frac{1}{x} ) e^{- \frac{1}{x}}
\]
satisfy \eqref{eqg21}.
\end{claim}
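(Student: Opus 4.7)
The plan is to reduce the relation $Zf_{n+1} = f_n$ to an ODE in $x$, then use the substitution $u = 1/x$ to transform it into a polynomial recursion in $u$. First I would set $F_{n+1}(x) = \int_0^x f_{n+1}(t)\,dt$, so that $Zf_{n+1} = f_n$ is equivalent to
\[
F_{n+1}(x) \= xf_n(x) + x^2 f_{n+1}(x).
\]
Because the ansatz $f_k(x) = p_k(1/x)e^{-1/x}$ and all its polynomial-in-$1/x$ multiples decay to $0$ faster than any power of $x$ as $x \to 0^+$, both sides vanish at $0$, so it suffices to equate their derivatives. Cancelling an $f_{n+1}$ on each side yields the ODE
\[
x^2 f_{n+1}'(x) + (2x-1)f_{n+1}(x) + f_n(x) + xf_n'(x) \= 0.
\]

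Next I would substitute the ansatz together with $f_k'(x) = \frac{1}{x^2}[p_k(1/x) - p_k'(1/x)]e^{-1/x}$, divide through by $e^{-1/x}$, and multiply by $u = 1/x$. After a short simplification, the ODE collapses to the polynomial identity
\[
u\,p_{n+1}'(u) - 2 p_{n+1}(u) \= u(1+u)p_n(u) - u^2 p_n'(u).
\]
I would then prove the claim by induction on $n$. The base case $p_0(u) = u^2$ from \eqref{eqg17} has degree $2 = 2\cdot 0 + 2$ and lowest term of degree $0 + 2$. For the inductive step, write $p_n(u) = \sum_{k=n+2}^{2n+2} a_k u^k$ with $a_{n+2}$ and $a_{2n+2}$ both nonzero. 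A direct expansion shows the right-hand side of the recursion equals $\sum_k (1-k)a_k u^{k+1} + \sum_k a_k u^{k+2}$, whose lowest term is $-(n+1)a_{n+2}u^{n+3}$ and whose highest term is $a_{2n+2}u^{2n+4}$. Writing $p_{n+1}(u) = \sum_k b_k u^k$, the left-hand side becomes $\sum_k (k-2)b_k u^k$, and matching coefficients uniquely determines each $b_k$ as $(\text{RHS coefficient of } u^k)/(k-2)$. This produces $p_{n+1}$ with nonzero extreme coefficients $b_{n+3} = -a_{n+2}$ and $b_{2n+4} = a_{2n+2}/(2n+2)$ at degrees $(n+1)+2$ and $2(n+1)+2$ respectively, completing the induction.

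The only obstacle to watch for is that the recursion requires dividing by $k-2$; this is harmless because our indices range over $k \geq n+3 \geq 3$. I would then note that $f_{n+1} \in L^2[0,1]$, which is immediate since $p_{n+1}(1/x)e^{-1/x}$ is smooth on $(0,1]$ and decays to $0$ at the origin. Running the derivation backward with the constructed $p_{n+1}$ confirms $Zf_{n+1} = f_n$: both $F_{n+1} = Vf_{n+1}$ and $xf_n + x^2 f_{n+1}$ vanish at $0$ and have equal derivatives by the ODE, so they agree on $[0,1]$.
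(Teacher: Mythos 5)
Your proof is correct and is essentially the paper's argument: an induction on $n$ with the ansatz $f_n(x)=p_n(\tfrac1x)e^{-1/x}$, reducing $Zf_{n+1}=f_n$ to a first-order linear ODE after observing that $Vf_{n+1}$ and $xf_n+x^2f_{n+1}$ agree because both vanish at $0$ and have equal derivatives. The only difference is in execution: the paper solves for $F_{n+1}=Vf_{n+1}$ by quadrature with the integrating factor $e^{1/x}$ and then subtracts a multiple of $f_0$ to remove the spurious degree-two term, whereas you differentiate once more and solve the polynomial recursion $u\,p_{n+1}'(u)-2p_{n+1}(u)=u(1+u)p_n(u)-u^2p_n'(u)$ by matching coefficients (legitimate since $k-2\ge n+1>0$ on the relevant range), which also makes the nonvanishing of the extreme coefficients $b_{n+3}=-a_{n+2}$ and $b_{2n+4}=a_{2n+2}/(2n+2)$ explicit.
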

We prove this by induction on $n$. It is true when $n=0$. Assume we have proved it up to level $n$,
and we want to prove it for $n+1$.
So we wish to solve the equation
\be
\label{eqg22}
Z f_{n+1} \= f_n = p_n(\frac{1}{x} ) e^{- \frac{1}{x}}
\ee
and show that the solution is of the form
\be
\label{eqg23}
f_{n+1}(x) \= p_{n+1}(\frac{1}{x} ) e^{- \frac{1}{x}}.
\ee
Writing $\Fno$ for $V \fno$, equation \eqref{eqg22} is
\beq
(H- M_x) \fno (x) &\=& \frac{1}{x} \int_0^x \fno(t) dt - x \fno(x) \\
&=& 
\frac{1}{x}  \Fno(x)  - x \Fno'(x) \\
&=& 
f_n(x).
\eeq
This gives us the linear differential equation
\[
\Fno'(x) - \frac{1}{x^2} \Fno(x) \= - \frac{1}{x} f_n(x) .
\]
Multiply by the integrating factor $e^{\frac{1}{x}}$ to get
\beq
\frac{d}{dx} \left[ e^{\frac{1}{x}} \Fno(x) \right] &\=& 
 - \frac{1}{x} e^{\frac{1}{x}} f_n(x) \\
 &=& 
 - \frac{1}{x} p_n(\frac{1}{x}) .
 \eeq
 Therefore
 \[
  e^{\frac{1}{x}} \Fno(x) \= q_{n} ( \frac{1}{x}) ,
  \]
  where $q_n$ is a polynomial of degree $2n+2$ that may have a constant term, and whose next lowest order term is of degree $n+2$.
  This gives
  \beq
  \fno(x) & \=&  \frac{d}{dx} \left[ e^{-\frac{1}{x}}q_{n} ( \frac{1}{x}) \right]
  \\
  &=& 
e^{-\frac{1}{x}}\left[  \frac{1}{x^2} q_n (   \frac{1}{x})  -  \frac{1}{x^2} q_n' (   \frac{1}{x})\right]
\eeq
Let 
\[
p_{n+1} (x) \= x^2 q_n(x) - x^2 q_n'(x) .
\]
The degree of $p_{n+1}$ is two higher than $q_n$, so it is  $2(n+1) + 2$. There may be a term of order $2$;
the next lowest order term is $n+3 = (n+1) + 2$.
But as $Z f_0 = 0$, one can subtract a multiple of $f_0$ from $f_{n+1} $ without changing
\eqref{eqg22}, so we can assume that $p_{n+1}$ has no term of order $2$.

So we have proved Claim \ref{clg2}. As any function of the form \eqref{eqg23} is in $L^2$,
we are done.
\ep

For points in the stick, a similar method works, but there are restrictions when requiring the
generalized eigenvectors to be in $L^2$. Here  is one result.
\begin{lem}
\label{lemg2}
Let $\lambda \in (-1,0)$. Then $Z$ has a generalized eigenvalue of order $1$ at $\lambda$ if and only if
$-\frac{2}{3} < \lambda < 0$.
\end{lem}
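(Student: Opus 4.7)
I would prove this by setting up and explicitly solving the equation $(Z-\lambda)g = f$, where $f$ is the eigenvector at $\lambda$ given by \eqref{eqg3}, and then determining precisely when the resulting $g$ can be chosen to lie in $L^2$. Writing $G(x) = Vg(x)$, the equation translates into the first-order linear ODE
\[
(x+\lambda) G'(x) - \frac{1}{x} G(x) \= -f(x),
\]
with the side condition $G(0) = 0$. Since $f$ is supported on $[-\lambda, 1]$, on the subinterval $[0,-\lambda]$ the right-hand side vanishes and one is solving the same homogeneous equation analyzed in the proof of Theorem \ref{thmg1}(i); the requirement $g \in L^2$ (equivalently $G(0)=0$) forces $G \equiv 0$ on $[0,-\lambda]$. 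The problem therefore reduces to a Cauchy problem on $[-\lambda, 1]$ with the singular boundary value $G(-\lambda) = 0$.

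On $[-\lambda, 1]$ I would use the integrating factor $\mu(x) = ((x+\lambda)/x)^{1/\lambda}$ together with the partial fraction decomposition of $1/(x(x+\lambda)^2)$ to integrate explicitly, obtaining
\[
G(x) \= \left(\frac{x}{x+\lambda}\right)^{1/\lambda}\left[ -\frac{1}{\lambda^2}\ln\frac{x}{x+\lambda} - \frac{1}{\lambda(x+\lambda)} + C \right]
\]
for an arbitrary constant $C$. The critical step is then the asymptotic analysis as $x \to (-\lambda)^+$. Setting $u = x+\lambda$ and $a = -\lambda \in (0,1)$, one has $(x/(x+\lambda))^{1/\lambda} \sim a^{-1/a} u^{1/a}$ while the bracket is dominated by $1/(au)$, so the product yields $G(x) \sim a^{-1-1/a} u^{1/a - 1}$; in particular $G(x) \to 0$ automatically at $-\lambda$, for every value of $C$, so the boundary condition imposes no restriction. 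Differentiating term by term then gives
\[
g(x) \= G'(x) \,\sim\, \frac{1-a}{a}\, a^{-1-1/a} (x+\lambda)^{-1/\lambda - 2}
\]
as $x \to (-\lambda)^+$. Varying $C$ only alters subleading terms of order $(x+\lambda)^{-1/\lambda - 1}$, which correspond precisely to adding a multiple of the eigenvector $f$ to $g$ and cannot cancel the leading singularity.

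Since $a \in (0,1)$ makes the coefficient $(1-a)a^{-2-1/a}$ strictly nonzero, $g$ is square integrable near $-\lambda$ if and only if $2(-1/\lambda - 2) > -1$, i.e.\ $\lambda > -2/3$. Away from $x = -\lambda$ the function $g$ is continuous on $(-\lambda, 1]$ and identically zero on $(0,-\lambda)$, so this endpoint is the only possible source of obstruction. For $\lambda \in (-2/3, 0)$ the resulting $g \in L^2$ satisfies $(Z-\lambda)g = f \neq 0$ and $(Z-\lambda)^2 g = (Z-\lambda)f = 0$, providing a generalized eigenvector of order exactly $1$; for $\lambda \in (-1, -2/3]$ no such $L^2$ vector exists. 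The main technical obstacle I expect is carrying out the asymptotic expansion with enough care to verify both that the leading coefficient is genuinely nonzero throughout the range $a \in (0,1)$ and that no choice of $C$ can eliminate the $(x+\lambda)^{-1/\lambda - 2}$ singularity.
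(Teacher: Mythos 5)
Your proposal is correct and follows essentially the same route as the paper: reduce $(Z-\lambda)g=f$ to a first-order ODE for $G=Vg$, integrate with the factor $\left(\frac{x}{x+\lambda}\right)^{1/\lambda}$ to get the same explicit solution, observe that the free constant only adds multiples of the eigenvector, and read off from the leading singularity $(x+\lambda)^{-1/\lambda-2}$ that an $L^2$ solution exists exactly when $\lambda>-\tfrac23$. The only (minor) addition is your explicit argument that $G$ must vanish on $[0,-\lambda]$, which the paper simply asserts.
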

\bp
Let $s = -\lambda$. 
\be
\label{eq29}
\fz(x) \=  \left( \frac{x-s}{x} \right)^{\frac{1}{s}} \frac{1}{x(x-s)} \chi_{[s,1]} .
\ee
All the functions below are supported on $[s,1]$.
We wish to find a function $\fo$ that satisfies
\be
\label{eqg30}
(Z + s) \fo \= \fz .
\ee
Writing $\Fo$ for $V \fo (x)= \int_s^x \fo(t) dt$, this becomes
\[
\frac1x \Fo - (x-s) \Fo' \= \fz.
\]
or
\be
\label{eqg31}
\Fo' - \frac{1}{x(x-s)} \Fo \= - \frac{1}{x-s} \fz .
\ee
An integrating factor for \eqref{eqg31} is $\left( \frac{x}{x-s} \right)^{\frac{1}{s}}$.
This yields
\beq
\frac{d}{dx} \left[  \left( \frac{x}{x-s} \right)^{\frac{1}{s}}\Fo \right]
&\=&
- \left( \frac{x}{x-s} \right)^{\frac{1}{s}} \frac{1}{x-s} \fz
\\
&=&
- \frac{1}{x(x-s)^2} .
\eeq
Integrating, we get
\[
 \left( \frac{x}{x-s} \right)^{\frac{1}{s}}\Fo \=
 \frac{1}{s^2} \log \frac{x-s}{x} + \frac1s \frac{1}{x-s} + c .
 \]
Dividing through by the integrating factor and differentiating, we get
\beq
\fo(x) &\=& 
\frac{1}{s^2} \left[   \left( \frac{x-s}{x} \right)^{\frac{1}{s}-1} \frac{1}{x^2} \log \frac{x-s}{x} 
+  \left( \frac{x-s}{x} \right)^{\frac{1}{s}} \frac {s}{x(x-s)} \right] \\
&& + 
\frac{1}{s} \left[ -\frac{1}{(x-s)^2} \left( \frac{x-s}{x} \right)^{\frac{1}{s}} + \frac{1}{x-s} \left( \frac{x-s}{x} \right)^{\frac{1}{s}-1} \frac{1}{x^2} \right] \\
&&+ c \left[ \left( \frac{x-s}{x} \right)^{\frac{1}{s}-1} \frac{1}{x^2} \right] .
\eeq
We can choose $c=0$, since it is the coefficient of $\fz$. This gives
\be
\label{eqg26}
\fo(x) \= \left( \frac{x-s}{x} \right)^{\frac{1}{s}} \left[
\frac{1}{s^2} \frac{1}{x(x-s)}  \log \frac{x-s}{x} 
+ \frac{1-s}{s} \frac{1}{x(x-s)^2} \right] .
\ee
Examining this expression, we see that $\fo$ is smooth on $(s,1]$, and the first  term
\[
\frac{1}{s^2}
 \frac{(x-s)^{\frac{1}{s} - 1}}{x^{\frac{1}{s} + 1}}
  \log \frac{x-s}{x} 
\]
vanishes  at $s$ for every $s < 1$. However the second term
\[
\frac{1-s}{s}\  \frac{(x-s)^{\frac{1}{s} - 2}}{x^{\frac{1}{s} +1}} 
\]
 has a singularity that grows like
$(x-s)^{\frac{1}{s} - 2} $.
This is integrable for every $s < 1$, but it is only in $L^2[s,1]$ for $s < \frac{2}{3}$.
So we have shown that \eqref{eqg30} has a solution $\fo$ in $L^2$ if and only if
$\lambda > - \frac23$.
\ep

One can repeat the argument of Lemma \ref{lemg2} to get higher order generalized eigenvectors,
as $\lambda$ gets closer to $0$.

\begin{lem}
\label{lemg3}
Let $m \geq 1$.
Let $\lambda$ lie in the interval $(- \frac{2}{2m+1}, 0)$. Then $Z$ has generalized eigenvectors up to order $m$ at $\l$.
\end{lem}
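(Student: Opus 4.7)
The plan is to iterate the construction of Lemma \ref{lemg2} and track carefully how the singularity at $x=s$ grows with each step. Set $s = -\lambda$, start with the eigenvector $f_0$ from \eqref{eq29}, and inductively produce functions $f_n$ supported on $[s,1]$ satisfying $(Z+s) f_{n+1} = f_n$. I would prove by induction on $n$ that $f_n$ admits an expansion near $x=s$ of the form
\[
f_n(x) \= C_n\, (x-s)^{\frac{1}{s} - n - 1} \+ (\text{lower-order and logarithmic terms}),
\]
with $C_n \neq 0$. Once this asymptotic is in hand, the $L^2$ condition at the endpoint $s$ is $2\,\Re(\tfrac{1}{s} - n - 1) > -1$, equivalently $s < \tfrac{2}{2n+1}$. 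Taking $n=m$ yields the stated bound $\lambda > -\tfrac{2}{2m+1}$, since the behavior at $x=1$ is smooth and the constraint from the endpoint $s$ is the binding one.

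For the inductive step, setting $F_{n+1} = V f_{n+1}$, the equation $(Z+s) f_{n+1} = f_n$ reduces, exactly as in the proof of Lemma \ref{lemg2}, to the linear ODE
\[
F_{n+1}' \;-\; \frac{1}{x(x-s)}\, F_{n+1} \= -\,\frac{1}{x-s}\, f_n,
\]
with integrating factor $\mu(x) = \bigl(\tfrac{x}{x-s}\bigr)^{1/s}$. Multiplying through and integrating gives
\[
\mu(x)\, F_{n+1}(x) \= -\int \mu(x)\, \frac{1}{x-s}\, f_n(x)\, dx \+ c.
\]
By the inductive hypothesis the integrand behaves like a nonzero constant multiple of $(x-s)^{-n-2}$ near $s$ (the factors $\mu(x)$ and $(x-s)^{1/s}$ inside $f_n$ cancel), so its antiderivative picks up a nonzero leading term of order $(x-s)^{-n-1}$. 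Dividing by $\mu(x)$ gives $F_{n+1}(x) \sim C_n'(x-s)^{\frac{1}{s} - n - 1}$, and differentiating produces $f_{n+1}(x) \sim C_{n+1}(x-s)^{\frac{1}{s} - n - 2}$ with $C_{n+1} = (\tfrac{1}{s} - n - 1) C_n' \neq 0$ (the exceptional value $\tfrac{1}{s} = n+1$ corresponds exactly to the boundary $s = \tfrac{1}{n+1} > \tfrac{2}{2n+3}$ that lies outside our range). The constant of integration $c$ multiplies an expression proportional to $f_0$, which can be absorbed (as in Lemma \ref{lemg2}) since $(Z+s) f_0 = 0$, so it plays no role.

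The main obstacle is keeping bookkeeping on the asymptotic expansion honest: logarithmic terms of the form $(\log\tfrac{x-s}{x})^k$ appear and compound with each iteration, and one must verify that the genuine leading power-law coefficient is nonzero rather than being cancelled by a log-enhanced subleading contribution. This is handled by writing the expansion of $f_n$ as a finite sum $(x-s)^{\frac{1}{s}-n-1}\sum_{k=0}^{n} a_{n,k}(x)(\log\tfrac{x-s}{x})^k$ with $a_{n,k}$ smooth at $s$ and $a_{n,0}(s) \neq 0$, and checking that the ODE step preserves this form with $a_{n+1,0}(s)$ obtained from $a_{n,0}(s)$ by division by the nonzero factor $\tfrac{1}{s}-n-1$. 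The logarithmic factors are bounded by arbitrarily small powers of $(x-s)^{-\varepsilon}$, so they do not alter the $L^2$ threshold. Once the asymptotic form is established, the existence of $f_m \in L^2[s,1]$ for $s < \tfrac{2}{2m+1}$ is immediate, and $(Z+s)^{m+1} f_m = 0$ while $(Z+s)^m f_m = f_0 \neq 0$, giving the desired order-$m$ generalized eigenvector.
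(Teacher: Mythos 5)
Your construction is correct and is essentially the paper's proof: the same recursion $(Z+s)f_{n+1}=f_n$, the same first-order ODE for $F_{n+1}=Vf_{n+1}$ with integrating factor $\bigl(\tfrac{x}{x-s}\bigr)^{1/s}$, the same singularity exponent $(x-s)^{\frac{1}{s}-n-1}$ at the left endpoint, and the same $L^2$ threshold $s<\tfrac{2}{2m+1}$. The one difference is that you track a full asymptotic expansion with logarithmic corrections and a provably nonzero leading coefficient, whereas the paper's Claim \ref{clg31} only establishes the one-sided bound $|f_n(x)|\le M_n(x-s)^{\frac{1}{s}-n-1}$, which is all the statement requires: nondegeneracy of the generalized eigenvector is automatic from the recursion, since $(Z-\lambda)^m f_m=f_0\neq 0$. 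So the log-bookkeeping and the nonvanishing of $a_{n,0}(s)$ --- the most delicate part of your write-up, and the one place with a small slip (the leading coefficient is multiplied, not divided, by the nonzero factor $\tfrac{1}{s}-n-1$) --- can simply be dropped for this direction; that extra precision would only be needed for a converse in the spirit of the ``only if'' half of Lemma \ref{lemg2}.
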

\bp
We shall inductively find functions $f_n$ satisfying $(Z-\l) \fno = f_n$, with $\fz$ as in \eqref{eq29}.
Let $s = -\l$, and 
 write
\[
\Phi(x) \=   \left( \frac{x-s}{x} \right)^{\frac{1}{s}} \chi_{[s,1]}(x) .
\]
Then we have
\beq
\fz(x) &\=& \Phi(x) \frac{1}{x(x-s)}  \\
\fo(x) &\=&  \Phi(x) \left[
\frac{1}{s^2} \frac{1}{x(x-s)}  \log \frac{x-s}{x} 
+ \frac{1-s}{s} \frac{1}{x(x-s)^2} \right] .
\eeq
Writing $\Fno$ for $V \fno = \int_s^x \fno(t) dt$, we want to solve
\be
\label{eqg41}
\frac{1}{x} \Fno(x) - (x-s) \Fno'(x) \= f_n(x) .
\ee
After multiplying by the integrating factor $1/\Phi$, we have
\be
\label{eqg33}
\frac{d}{dx} \left[ \frac{1}{\Phi} \Fno \right] \= - \frac{1}{\Phi} \frac{1}{x-s} f_n (x) .
\ee
\begin{claim}
\label{clg31} There are constants $M_n$ such that the functions $f_n$  satisfy
\be
\label{eqg42}
|f_n(x) | \ \leq \ M_n (x-s)^{\frac1s - n - 1} \qquad \forall x \in (s,1] .
\ee
\end{claim}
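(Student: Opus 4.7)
The plan is induction on $n$. The base case $n=0$ follows immediately from \eqref{eq29}: since $\Phi(x) = ((x-s)/x)^{1/s} \leq (x-s)^{1/s}$ on $[s,1]$ and $x \geq s > 0$, one reads off $|f_0(x)| \leq M_0 (x-s)^{1/s - 1}$.

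For the inductive step, construct $f_{n+1}$ via \eqref{eqg41} and the integrating-factor identity \eqref{eqg33}. A direct computation gives
\[
\Phi'(x) \= \frac{(x-s)^{1/s - 1}}{x^{1/s + 1}} \= \frac{\Phi(x)}{x(x-s)},
\]
so integrating \eqref{eqg33} and writing $F_{n+1}(x) = \Phi(x) G_n(x)$ with
\[
G_n(x) \= - \int_1^x \frac{f_n(t)}{\Phi(t)\,(t-s)} \, dt + C,
\]
differentiation yields
\[
f_{n+1}(x) \= \Phi'(x) G_n(x) - \frac{f_n(x)}{x-s}.
\]
The choice of $C$ is immaterial for the bound: altering $C$ adds a multiple of $\Phi'(x) = f_0(x)$, and this base-case term is dominated by $(x-s)^{1/s - (n+1) - 1}$ on $(s,1]$. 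The second piece is bounded by $M_n (x-s)^{1/s - (n+1) - 1}$ directly by the inductive hypothesis. For the first piece, $|\Phi'(x)| \leq C_1 (x-s)^{1/s - 1}$ from the explicit formula above; and since $t^{1/s}$ is bounded on $[s,1]$, one has $1/\Phi(t) \leq C_2 (t-s)^{-1/s}$, so the inductive bound gives
\[
\left| \frac{f_n(t)}{\Phi(t)\,(t-s)} \right| \leq M_n C_2 (t-s)^{-n-2}.
\]
Integrating from $x$ to $1$ produces the crucial one-power gain $|G_n(x)| \leq C_3 (x-s)^{-(n+1)}$ for $x$ near $s$, whence $|\Phi'(x) G_n(x)| \leq C_4 (x-s)^{1/s - (n+1) - 1}$. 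Adding the two estimates closes the induction with $M_{n+1} = M_n + C_4$.

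The main obstacle is purely bookkeeping: verifying that the antiderivative $G_n$ grows exactly one power of $(x-s)$ more slowly than its integrand, so that multiplication by $\Phi'(x) \sim (x-s)^{1/s - 1}$ reproduces the target exponent $1/s - (n+1) - 1$. The identity $\Phi'(x) = \Phi(x)/[x(x-s)]$ is what makes the two pieces of $f_{n+1}$ combine cleanly without introducing a worse singularity; everything else is straightforward estimation.
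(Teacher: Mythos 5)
Your argument is correct and is essentially the paper's own proof: the same induction, the same integrating-factor identity \eqref{eqg33}, the one-power gain from integrating the $O\big((t-s)^{-n-2}\big)$ integrand, and absorption of the constant of integration into a multiple of $f_0$ (your identity $\Phi'(x)=\Phi(x)/[x(x-s)]$ makes your formula $f_{n+1}=\Phi'G_n-\tfrac{1}{x-s}f_n$ literally the paper's \eqref{eqg45}). One small slip in the base case: on $[s,1]$ one has $\Phi(x)=\left(\tfrac{x-s}{x}\right)^{1/s}\geq (x-s)^{1/s}$, not $\leq$; the correct estimate uses $x\geq s$, giving $\Phi(x)\leq s^{-1/s}(x-s)^{1/s}$ and hence $|f_0(x)|\leq s^{-1-1/s}(x-s)^{1/s-1}$, which still yields the claim with only the constant $M_0$ changed.
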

{\sc Proof of Claim \ref{clg31}:}
By induction on $n$. It is true when $n=0$. Assume it is true up to $n$.
From \eqref{eqg33} we get for $x \in (s,1]$:
\be
\label{eqg43}
\frac{1}{\Phi(x) } \Fno (x) \= \int_x^1 \frac{1}{\Phi (t)} \frac{1}{t-s} f_n (t) dt + c_n .
\ee
By the inductive hypothesis, the integrand in \eqref{eqg43} is $O(t-s)^{-n-2}$, so the integral is
$O(x-s)^{-n-1}$.
So $\Fno$ satisfies
\be
\label{eqg44}
\Fno(x) \= c_n \Phi(x) + O(x-s)^{\frac1s-n-1} .
\ee
From \eqref{eqg41} we have 
\be
\label{eqg45}
\fno (x) \= \frac{1}{x(x-s)} \Fno (x) - \frac{1}{x-s} f_n(x) .
\ee
When we use \eqref{eqg44} for $\Fno$,
we get
\be
\label{eqg46}
\notag
\fno (x) \= c_n f_0(x) - \frac{1}{x-s} f_n(x) +O(x-s)^{\frac1s-n -2}.
\ee
% the term with  $c_n$ is $c_n f_0$, so we can take $c_n = 0$
%(since this will not affect $(Z-\l) \fno$).
Now
the claim follows from  the inductive hypothesis on $f_n$. \ep
It follows from \eqref{eqg45} that  that $f_m$ is continuous on $(s,1]$, and Claim \ref{clg31}  shows that its singularity at $s$ is of order
$(x-s)^{\frac{1}{s} -m -1 }$. This means $f_m$ is in $L^2$ provided
$
\frac{1}{s} -m -1 \ > \ - \frac{1}{2} , 
$
which is the same as $ s < \frac{2}{2m+1}$.
\ep
For later use, let us note that if you track the constants $M_n$ in Claim \ref{clg31}, you can show:
\begin{lem}
\label{lemg35}
In Claim \ref{clg31} one can take $M_0 = \frac{1}{(s^{1+1/s})}$ and the consants $M_n$ satisfy
\[
M_{n+1} \leq M_n \left( 1 + \frac{M_0}{n+1}\right) .
\]
\end{lem}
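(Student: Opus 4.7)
The plan is to refine the induction inside Claim \ref{clg31}, tracking the constants in each estimate rather than only recording the orders of the singularities. With $s = -\lambda \in (0,1)$, write $\Phi(x) = ((x-s)/x)^{1/s}$ as in the proof of Lemma \ref{lemg3}, so that $\fz(x) = \Phi(x)/(x(x-s)) = (x-s)^{1/s-1}/x^{1/s+1}$ on $[s,1]$. For the base case, since $x \geq s$ on the support we have $x^{-(1/s+1)} \leq s^{-(1/s+1)}$, giving $|\fz(x)| \leq s^{-(1+1/s)} (x-s)^{1/s-1}$, so $M_0 = 1/s^{1+1/s}$.

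For the inductive step the key identity is \eqref{eqg43}:
\[
\frac{1}{\Phi(x)} F_{n+1}(x) \= \int_x^1 \frac{1}{\Phi(t)(t-s)} f_n(t)\, dt + c_{n+1}.
\]
As noted in \eqref{eqg46}, different choices of $c_{n+1}$ differ by a multiple of the eigenfunction $\fz$, so we are free to take $c_{n+1} = 0$ at every stage (this preserves the equation $(Z-\lambda)\fno = f_n$). Using $1/[\Phi(t)(t-s)] = t^{1/s}/(t-s)^{1/s+1}$ together with the inductive hypothesis $|f_n(t)| \leq M_n (t-s)^{1/s-n-1}$ and the simple bound $t^{1/s} \leq 1$ for $t \in [s,1]$ (valid since $1/s>0$), the integrand is bounded by $M_n (t-s)^{-n-2}$. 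Integrating and multiplying by $\Phi(x)$ yields
\[
|F_{n+1}(x)| \ \leq\  \frac{M_n}{n+1} \cdot \frac{(x-s)^{1/s-n-1}}{x^{1/s}}.
\]

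Substituting this into \eqref{eqg45} and using the triangle inequality together with the inductive hypothesis for the second term gives
\[
|f_{n+1}(x)| \ \leq\  \frac{M_n}{(n+1)\, x^{1/s+1}} (x-s)^{1/s-n-2} + M_n (x-s)^{1/s-n-2}.
\]
The factor $1/x^{1/s+1}$ is again bounded on $[s,1]$ by $1/s^{1/s+1} = M_0$, so we obtain
\[
|f_{n+1}(x)| \ \leq\  M_n \left( 1 + \frac{M_0}{n+1} \right) (x-s)^{1/s-(n+1)-1},
\]
which is the required recursion.

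The argument is really a matter of careful bookkeeping; there is no conceptual obstacle beyond making the inequality $t^{1/s} \leq 1$ explicit, choosing the integration constant to be zero at each step, and noting that the factor $1/x^{1/s+1}$ attains its $[s,1]$-maximum at $x=s$. The only mild subtlety is checking that setting $c_{n+1} = 0$ is consistent across the whole induction, but this is immediate from the observation in \eqref{eqg46} that such a choice only omits a multiple of the eigenfunction.
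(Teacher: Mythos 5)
Your proof is correct and is precisely the constant-tracking of the induction in Claim \ref{clg31} that the paper indicates (the bounds $x^{-(1/s+1)} \leq s^{-(1/s+1)}$, $t^{1/s}\leq 1$, the estimate $\int_x^1 (t-s)^{-n-2}\,dt \leq \frac{1}{n+1}(x-s)^{-n-1}$, and the choice $c_n=0$, which the paper also makes in Lemma \ref{lemg4}), so it matches the intended argument. No gaps.
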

\begin{lemma}
\label{lemg4} Let $m \geq 1$.
On the interval $(-\frac{2}{2m+1}, 0)$ one can choose the generalized eigenvectors of order $n$ 
of $Z - \l$ continuously in $\l$, for every $n \leq m$, and satisfying $(Z- \l) f_{\l,n} = f_{\l,n-1}$,
for every $1 \leq n \leq m$.
\end{lemma}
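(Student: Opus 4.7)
The plan is to follow the explicit construction in the proof of Lemma \ref{lemg3}, pin down the free integration constants canonically, and then verify $L^2$-continuity in $\lambda$ by a dominated convergence argument.

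First I would fix the integration constants $c_n$ appearing in \eqref{eqg43} by setting $c_n = 0$ for every $n \geq 0$. With this choice the recursion
\[
\frac{1}{\Phi(x)} F_{n+1}(x) = \int_x^1 \frac{1}{\Phi(t)} \frac{1}{t-s}\, f_n(t)\, dt
\]
combined with \eqref{eqg45} determines each $f_{\lambda, n+1}$ as an explicit function of $x$ and $s = -\lambda$. By inspection these formulas depend jointly continuously on $(x, \lambda)$ on the open set $\{(x, \lambda) : -\lambda < x \leq 1\}$, and the relation $(Z - \lambda) f_{\lambda, n+1} = f_{\lambda, n}$ holds by the very construction of $F_{\lambda, n+1}$ as a solution of the inhomogeneous ODE \eqref{eqg41}. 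I extend each $f_{\lambda, n}$ by zero to all of $[0,1]$.

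Next I would prove continuity of $\lambda \mapsto f_{\lambda, n}$ into $L^2[0, 1]$ by induction on $n$. Fix $\lambda_0 \in (-\frac{2}{2m+1}, 0)$ and a closed subinterval $I \ni \lambda_0$ contained in $(-\frac{2}{2m+1}, 0)$. By Claim \ref{clg31} and Lemma \ref{lemg35}, there is a constant $M$ uniform for $\lambda \in I$ with
\[
|f_{\lambda, n}(x)| \leq M\, (x - s)^{1/s - n - 1}\, \chi_{[s, 1]}(x), \qquad s = -\lambda.
\]
Since the $s$-range of $I$ is compactly contained in $(0, \frac{2}{2m+1})$, these pointwise bounds admit a common $L^2$-dominant $g \in L^2[0,1]$ valid for every $\lambda \in I$. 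Pointwise in $x > -\lambda_0$, the explicit formula gives $f_{\lambda, n}(x) \to f_{\lambda_0, n}(x)$ as $\lambda \to \lambda_0$; dominated convergence then yields $L^2$-continuity.

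The main obstacle is the shifting support: $f_{\lambda, n}$ vanishes on $[0, -\lambda]$ while $f_{\lambda_0, n}$ vanishes on $[0, -\lambda_0]$, so their supports differ. I would handle this by splitting $\|f_{\lambda, n} - f_{\lambda_0, n}\|_{L^2}^2$ into a piece over a small interval containing both endpoints $-\lambda$ and $-\lambda_0$ (controlled uniformly by the absolute continuity of the $L^2$-integrals of the dominant $g$) and a piece over a fixed compact subinterval of $(-\lambda_0, 1]$, on which the dominated convergence argument applies directly. Combining these estimates gives $\|f_{\lambda, n} - f_{\lambda_0, n}\|_{L^2} \to 0$ as $\lambda \to \lambda_0$, completing the induction and hence the proof.
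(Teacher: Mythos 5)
Your construction coincides with the paper's: you take the integration constants $c_n=0$ in \eqref{eqg43} (legitimate, since $c_nf_{\lambda,0}$ lies in the kernel of $Z-\lambda$), obtain the explicit recursion for $f_{\lambda,n+1}$ from \eqref{eqg45}, and invoke the uniform-in-$\lambda$ bounds coming from Claim \ref{clg31} and Lemma \ref{lemg35}. The only divergence is the final limiting step: the paper deduces $L^2$-continuity of $\lambda\mapsto f_{\lambda,n}$ from a.e.\ pointwise convergence plus uniform integrability of $|f_{\lambda,n}|^2$ via the Vitali convergence theorem, whereas you use dominated convergence together with a splitting near the moving endpoint.

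One assertion in your write-up is not literally correct: in general there is no single dominant $g\in L^2[0,1]$ with $|f_{\lambda,n}(x)|\le g(x)$ for every $\lambda\in I$. When the exponent $\frac1s-n-1$ is negative (e.g.\ $n=m$ and $s=-\lambda$ near $\frac{2}{2m+1}$), the bound $M\,(x-s)^{\frac1s-n-1}\chi_{[s,1]}(x)$ has a genuine singularity at the moving point $x=s$, and the envelope $\sup_{\lambda\in I}|f_{\lambda,n}(x)|$ is infinite on a whole interval of $x$-values, so no common square-integrable dominant exists. This does not sink your proof, because your endpoint splitting never really needs a global dominant: it needs only that $\int_J|f_{\lambda,n}|^2$ is small, uniformly in $\lambda\in I$, over short intervals $J$ containing $-\lambda$ and $-\lambda_0$, and this follows directly from the uniform bound, since $2\bigl(\frac1s-n-1\bigr)+1$ is bounded below by a positive constant as $s$ ranges over the compact $s$-interval, so $\int_s^{s+\delta}|f_{\lambda,n}|^2\le C\,\delta^{\epsilon_0}$ uniformly. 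Away from the endpoint a genuine local dominant does exist, and dominated convergence applies there as you say. Once the appeal to a global $g$ is replaced by this uniform smallness estimate, your argument is exactly the uniform integrability that the paper feeds into the Vitali convergence theorem, so the two proofs are essentially the same.
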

\bp
Let us write $f_{\l,n}$ for the choice of generalized eigenvector of order $n$ at $\l$.
Write
\[
\Phi(\l,x) \= \left( \frac{x+\l}{x} \right)^{-\frac{1}{\l}} \chi_{[-\l,1]}(x) .
\]
We have
\[
f_{\l,0} (x) \= \frac{1}{x(x+\l)} \Phi(\l,x) .
\]
On every compact subset $K$, of $(-1,0)$ the functions $\{ f_{\l,0} : \l \in K \}$ are uniformly bounded,
and $\lim_{\l' \to \l} f_{\l',0}(x) = f_{\l,0} (x)$ a.e., 
so the map $\l \mapsto f_{\l,0}$ is continuous as a map from $(-\frac{2}{2m+1}, 0)$ into $L^2$.

For higher $n$, we find $\fno$ as in Lemma \ref{lemg3}. At each stage, we take the constant $c_n$ in \eqref{eqg44} to be $0$. (We can do this because $c_n f_{\l,0}$ will be in the kernel of $Z-\lambda$.)
This gives us
\be
\notag
f_{\l,n+1} (x) \= \frac{1}{x(x-s)} \Phi(\l,x)   \int_x^1 \frac{1}{\Phi (\l,t)} \frac{1}{t-s} f_{\l,n} (t) dt- \frac{1}{x-s} f_{\l,n}(x) .
\ee
Moreover we have $f_{\l,n}(x) = 0$ if $x < - \l$ and 
\[
|f_{\l,n}(x)| \ \leq \ M_{n, \l} (x+\l)^{- \frac{1}{\l} - n - 1}, \qquad x > - \l .
\]
By Lemma \ref{lemg35}, we have that
%\[
%M_{0,\l} \ \leq\  \left( \frac23 \right)^{-\frac52} 
%\]
 each $M_{n,\l}$ can be chosen uniformly in $\l$  
for $\l$ in $(- \frac{2}{2m+1}, 0 )$.
For any interval $I$ of length $\delta$, we get
\beq 
\int_I |f_{\l,n}(x)|^2 dx &\ \leq \ &M_n \int_0^\delta |x+\l|^{- \frac{2}{\l} - 2n - 2}\\
&=& M_n \frac{1}{- \frac{2}{\l} - 2n - 1} \delta^{- \frac{2}{\l} - 2n - 1} .
\eeq
Therefore, as $\l$ ranges over any  compact subset of $(-\frac{2}{2m+1}, 0)$, the functions $|f_{\l,n} (x) |^2$ are uniformly
integrable in $x$. So, by the Vitali convergence theorem, the map 
\beq
(-\frac{2}{2m+1}, 0) & \ \to \ & L^2 \\
\l & \mapsto & f_{\l,n}
\eeq
is continuous.
\ep

These lemmas say that operators in the closed algebra generated by $Z$ have certain smoothness properties when mapped by $\sigma$ into
$A(\L)$.
The functions get smoother as we get closer to $0$.

\begin{thm}
\label{thmg2}
Let $X$ be in the norm-closed algebra generated by $Z$. Then $\theta(X)$ is  $C^m$ 
  on $ (-\frac{2}{2m+1},0)$.
\end{thm}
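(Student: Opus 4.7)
The plan is to extract the derivatives of $\theta(X)$ by applying $X$ to the chain of generalized eigenvectors $f_{\lambda,0},\ldots,f_{\lambda,m}$ produced by Lemma \ref{lemg4}. The starting observation is a Taylor identity: since $(Z-\lambda)^k f_{\lambda,n} = f_{\lambda,n-k}$ for $k\le n$ and $0$ for $k>n$, expanding $p(w)=\sum_k \tfrac{p^{(k)}(\lambda)}{k!}(w-\lambda)^k$ gives, for any polynomial $p$,
\[
p(Z)\,f_{\lambda,m}\=\sum_{k=0}^{m}\frac{p^{(k)}(\lambda)}{k!}\,f_{\lambda,m-k}.
\]
Thus $p(Z)f_{\lambda,m}$ always lies in the finite-dimensional subspace $E_\lambda:=\mathrm{span}\{f_{\lambda,0},\ldots,f_{\lambda,m}\}\subset L^2$, and the coefficient of $f_{\lambda,m-k}$ in $p(Z)f_{\lambda,m}$ reads off $p^{(k)}(\lambda)/k!$.

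Fix a compact interval $K\subset (-\tfrac{2}{2m+1},0)$. For each $\lambda\in K$ the vectors $f_{\lambda,0},\ldots,f_{\lambda,m}$ are linearly independent because they have distinct leading orders $(x+\lambda)^{1/\lambda-n-1}$ as $x\downarrow-\lambda$ by Claim \ref{clg31}. Combined with the continuity $\lambda\mapsto f_{\lambda,n}$ from Lemma \ref{lemg4}, this makes the Gram matrix $G(\lambda)=[\langle f_{\lambda,i},f_{\lambda,j}\rangle]$ continuous on $K$ and uniformly positive definite, so the biorthogonal vectors $g_{\lambda,k}:=\sum_j (G(\lambda)^{-1})_{jk}f_{\lambda,j}$ are continuous in $\lambda$ with $\sup_{\lambda\in K}\|g_{\lambda,k}\|<\infty$. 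Define the $L^2$-continuous functionals $\ell_{\lambda,k}(v):=\langle v,g_{\lambda,k}\rangle$; they satisfy $\ell_{\lambda,i}(f_{\lambda,j})=\delta_{ij}$ and are uniformly bounded on $K$.

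Now let $p_j(Z)\to X$ in operator norm. Each $p_j(Z)f_{\lambda,m}$ lies in the closed subspace $E_\lambda$, so does the limit $Xf_{\lambda,m}$. Applying $\ell_{\lambda,m-k}$ gives
\[
\frac{p_j^{(k)}(\lambda)}{k!}\=\ell_{\lambda,m-k}\bigl(p_j(Z)f_{\lambda,m}\bigr)\ \longrightarrow\ \ell_{\lambda,m-k}\bigl(Xf_{\lambda,m}\bigr)\=:F_k(\lambda),
\]
and the convergence is uniform in $\lambda\in K$ because $\|p_j(Z)-X\|\to 0$ while $\|f_{\lambda,m}\|$ and $\|\ell_{\lambda,m-k}\|$ are bounded on $K$; continuity of $F_k$ in $\lambda$ follows from continuity of $g_{\lambda,k}$ and $f_{\lambda,m}$. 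Since $\theta$ is continuous and $\theta(p_j(Z))$ restricts to $p_j$ on $[-1,0]$, we have $p_j\to\theta(X)$ uniformly on $[-1,0]$, so $F_0=\theta(X)|_K$. Uniform convergence $p_j^{(k)}\to k!\,F_k$ on $K$ for each $k=0,\ldots,m$ then forces $\theta(X)|_K$ to be $C^m$ with $k$-th derivative $k!\,F_k$, by iterating the fundamental theorem of calculus. Since $K$ was arbitrary, $\theta(X)\in C^m(-\tfrac{2}{2m+1},0)$. The main obstacle is the uniform invertibility of $G(\lambda)$ on compact $K$: linear independence at each point and continuity in $\lambda$ supply it by compactness, but one needs the estimates of Claim \ref{clg31} and Lemma \ref{lemg35} to ensure that $\|f_{\lambda,m}\|$ does not blow up as $\lambda$ approaches the endpoint $-\tfrac{2}{2m+1}$ within $K$.
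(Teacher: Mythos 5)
Your argument is essentially the paper's proof: the paper carries out the case $m=1$ explicitly (the chain $f_{\lambda,0},f_{\lambda,1}$ from Lemma \ref{lemg4}, a biorthogonal vector $g^1_\lambda$ obtained from the Gram system, norm convergence $p_j(Z)\to X$ together with uniform convergence $p_j\to\theta(X)$ on $\Lambda$, and continuity in $\lambda$ to pass derivatives to the limit), and your biorthogonal-system formulation is just the general-$m$ version the paper alludes to with ``a similar argument with higher derivatives.'' One small repair: justify linear independence of $f_{\lambda,0},\dots,f_{\lambda,m}$ by applying powers of $Z-\lambda$ to a vanishing linear combination (the standard Jordan-chain argument), since Claim \ref{clg31} gives only upper bounds on the singularities and so does not literally establish ``distinct leading orders''; also, boundedness of $\|f_{\lambda,m}\|$ on a compact $K\subset(-\tfrac{2}{2m+1},0)$ already follows from continuity in $\lambda$, no endpoint estimate is needed.
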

\bp
Let  $\theta(X) = \phi \in A(\L)$.
Let  $p_j$ be a sequence of polynomials  so that $\| p_j(Z) - X \| \to 0$.
It follows from Theorem \ref{thmc1} that $p_j$ converges to $\phi$ uniformly on $\L$.

Case: $m=1$.
Let $f_{\l,n}$ be as in Lemma \ref{lemg4}.
For any polynomial $p$, we have
\[
\la p(Z) f_{\l,0} , f_{\l,0}\ra \= \| f_{\l,0} \|^2 p(\l) .
\]
Moreover, 
\[
p(Z) f_{\l,1} \= p(\l) f_{\l,1} + p'(\l) f_{\l,0} .
\]
Let $g^1_\l$ be the linear combination of $f_{\l,0}$ and $f_{\l,1}$ that satisfies $\la f_{\l,0} , g^1_\l \ra = 1$ and $\la f_{\l,1} , g^1_\l \ra = 0$.
Then
\be
\notag
\la p(Z)  f_{\l,1}, g^1_\l \ra \= p'(\l) .
\ee
So as functions on $ (-\frac23, 0)$, we get that $p_j'(\l)$ converges to some function $\psi(\l) =
\la X  f_{\l,1}, g^1_\l \ra$.

\begin{claim}
For all $x$ in  $(-\frac23, 0)$, we have
$\phi'(x) = \psi(x)$, and $\psi$ is continuous.
\end{claim}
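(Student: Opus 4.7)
The plan is to verify the claim by showing two things: that $\psi$ is continuous on $(-\frac{2}{3},0)$, and that the convergence $p_j'(\lambda) \to \psi(\lambda)$ is uniform on compact subsets. Once uniform convergence of the derivatives is in hand, the classical theorem on term-by-term differentiation will yield $\phi'=\psi$.

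First I would establish continuity of $\lambda \mapsto g^1_\lambda$. Since $f_{\lambda,0}$ is an eigenvector and $f_{\lambda,1}$ is a genuine generalized eigenvector (not annihilated by $Z-\lambda$), the pair is linearly independent, so the $2\times 2$ Gram matrix $G(\lambda)$ with entries $\langle f_{\lambda,i}, f_{\lambda,j}\rangle$ is invertible. By Lemma \ref{lemg4}, $\lambda \mapsto f_{\lambda,0}$ and $\lambda \mapsto f_{\lambda,1}$ are continuous maps from $(-\frac{2}{3},0)$ into $L^2$, so the entries of $G(\lambda)$ are continuous in $\lambda$ and $G(\lambda)^{-1}$ is too. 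Since $g^1_\lambda$ is expressible via $G(\lambda)^{-1}$ as a specific linear combination of $f_{\lambda,0}$ and $f_{\lambda,1}$ (the one solving $\langle f_{\lambda,0},g^1_\lambda\rangle=1$, $\langle f_{\lambda,1},g^1_\lambda\rangle=0$), the map $\lambda \mapsto g^1_\lambda$ is continuous into $L^2$. Therefore
\[
\psi(\lambda) \= \langle X f_{\lambda,1}, g^1_\lambda\rangle
\]
is continuous in $\lambda$ on $(-\frac{2}{3},0)$.

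Next I would upgrade the convergence $p_j'(\lambda) \to \psi(\lambda)$ to uniform convergence on compact subsets. For any compact $K \subset (-\frac{2}{3},0)$, continuity gives $C_K := \sup_{\lambda \in K} \|f_{\lambda,1}\|\,\|g^1_\lambda\| < \infty$, and then
\[
|p_j'(\lambda) - \psi(\lambda)| \= |\langle (p_j(Z) - X) f_{\lambda,1}, g^1_\lambda\rangle| \ \leq\ C_K\,\|p_j(Z)-X\|
\]
for every $\lambda \in K$. Since $\|p_j(Z)-X\|\to 0$, we have $p_j'\to \psi$ uniformly on $K$. On the other hand, by Theorem \ref{thmc1} (and the earlier observation in the proof), $p_j \to \phi$ uniformly on $\Lambda$, and in particular on $K$. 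By the standard real-analysis theorem that uniform convergence of derivatives plus pointwise convergence of the functions implies the limit is differentiable with derivative equal to the limit of the derivatives, $\phi$ is differentiable on the interior of $K$ with $\phi'=\psi$ there. Since $K$ was an arbitrary compact subset of $(-\frac{2}{3},0)$, we conclude $\phi'(x)=\psi(x)$ for every $x \in (-\frac{2}{3},0)$.

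The main obstacle is really the continuity of $g^1_\lambda$, which rests on the linear independence of $f_{\lambda,0}$ and $f_{\lambda,1}$ and on the continuity of those two families in $L^2$; both are already available from Lemma \ref{lemg4}. After that, everything else is the routine application of the classical differentiation-under-limits theorem. This $m=1$ argument is also the template for the inductive argument needed for general $m$, where one would use the full family $f_{\lambda,0},\dots,f_{\lambda,m}$ and dual vectors $g^k_\lambda$ picking off the $k$-th derivative.
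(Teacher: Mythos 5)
Your proposal is correct and follows essentially the same route as the paper: continuity of $g^1_\l$ via the invertible Gram matrix and the $L^2$-continuity of $f_{\l,0}, f_{\l,1}$ from Lemma \ref{lemg4}, then locally uniform convergence of $p_j'$ to $\psi$, then the classical differentiation-under-uniform-limits theorem (the paper carries out this last step by integrating $p_j'$ and passing to the limit, which is the same argument). Your explicit bound $|p_j'(\l)-\psi(\l)|\leq \|p_j(Z)-X\|\,\|f_{\l,1}\|\,\|g^1_\l\|$ just makes the paper's locally uniform convergence assertion quantitative.
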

We have $g^1_\l = a_{\l,0} f_{\l,0} + a_{\l,1} f_{\l,1}$,  
where the coefficients $a_{\l,0} $ and $a_{\l,1}$ solve the linear system
\[
\begin{pmatrix} 
\la f_{\l,0} ,  f_{\l,0}\ra & \la  f_{\l,0},  f_{\l,1} \ra \\
\la  f_{\l,1},  f_{\l,0} \ra & \la  f_{\l,1},  f_{\l,1} \ra 
\end{pmatrix}
\begin{pmatrix}
a_{\l,0} \\
a_{\l,1} 
\end{pmatrix}
\=
\begin{pmatrix}
1\\0
\end{pmatrix}.
\]
By Lemma \ref{lemg4}, since $f_{\l,0}$ and $f_{\l,1}$ are continuous in $\l$, so
as they are linearly independent,
we have that $g^1_\l$ is also continuous in $\l$.
Therefore $\psi$ is continuous, and 
 $p_j'$ converges to $\psi$ locally uniformly on $(-\frac23,0)$.
 As $\int_{-\frac13}^x p_j'(t) dt = \phi(x) - \phi (- \frac13)$ converges to $\int_{-\frac13}^x \psi(t) dt$, we get that
%o $\phi(x) - \phi (- \frac13) = \int_{-\frac13}^x \psi(t) dt$.
 $\phi'(x) = \psi(x)$.
\ep
We have shown that $\phi$ is in $C^1(-\frac23,0)$. A similar argument with higher derivatives proves that $\phi$ is in $C^m(-\frac{2}{2m+1},0)$.
\ep
Of course one can also find generalized eigenvectors for $Z$ of all orders at points in $\D(1,1)$,
but we already know that $\theta(X)$ is analytic on $\D(1,1)$ for every $X \in \A$.

\section{Open Questions}

\begin{question}
Is there a good description of $\Kz$, the compact operators in $\A$?
\end{question}

\begin{question}
Is $C^*(Z) = C^*(\A)$? To prove this, it is sufficient to show that $C^*(Z)$ is irreducible, since
then it  would contain all the compacts, and its quotient by the compacts would be all of $C(\partial \Lambda)$.
\end{question}

\begin{question}
Do the eigenvectors of $Z$ span $L^2$?
%If the algebra generated by $Z$ contains a compact, they cannot.
\end{question}

%\begin{question}
%I assume that Theorem \ref{thmg2} can be improved to $C^m$ on $(-\frac{2}{2m+1}, 0]$, but
%it doesn't seem worth the effort.
%\end{question}

\bibliography{../../references_uniform_partial}
\end{document}